\numberwithin{equation}{section} 
\numberwithin{figure}{section} 
  \theoremstyle{plain}
  \newtheorem{thm}{Theorem}[section]
  \theoremstyle{definition}
  \newtheorem{defn}[thm]{Definition}
  \theoremstyle{plain}
  \newtheorem{lem}[thm]{Lemma}
  \theoremstyle{remark}
  \newtheorem{rem}[thm]{Remark}
  \theoremstyle{plain}
  \newtheorem{prop}[thm]{Proposition}
  \theoremstyle{remark}
  \newtheorem*{rem*}{Remark}
  \theoremstyle{plain}
  \newtheorem{cor}[thm]{Corollary}
  \theoremstyle{remark}
  \newtheorem*{acknowledgement*}{Acknowledgement}
\begin{document}
\subjclass[2000]{Primary 37A05, 37A40, 60G51; secondary 37A50}

\title{Poisson suspensions and infinite ergodic theory}

\author{Emmanuel Roy}

\curraddr{Laboratoire Analyse Géométrie et Applications, UMR 7539, Université
Paris 13, 99 avenue J.B. Clément, F-93430 Villetaneuse, France}

\email{roy@math.univ-paris13.fr}

\begin{abstract}
We investigate ergodic theory of Poisson suspensions. In the process,
we establish close connections between finite and infinite measure
preserving ergodic theory. Poisson suspensions thus provide a new
approach to infinite measure ergodic theory. Fields investigated here
are mixing properties, spectral theory, joinings. We also compare
Poisson suspensions to the apparently similar looking Gaussian dynamical
systems.
\end{abstract}

\keywords{Poisson suspensions, infinite ergodic theory, joinings}

\maketitle

\section{Introduction}

Poisson measures are classical constructions from probability theory.
They are the discrete counterpart of Brownian motion and they together
give way to an enormous proportion of probabilistic investigations.
In measure preserving ergodic theory, it is a functorial way to associate
two dynamical systems: one associated to a probability measure (the
Poisson measure) and one associated to a $\sigma$-finite measure
(the base of the Poisson measure). The measure preserving transformation
$T$ that sends a point to another on the base becomes a probability
preserving transformation $T_{*}$ on points configurations by sending
a configuration to another where each point is sent to its image according
to $T$. A Poisson suspension is a Poisson measure with this kind
of transformation.

After some generalities on Poisson suspensions, we recall the basics
of infinite measure preserving ergodic theory that are needed to study
Poisson suspensions. Theorem \ref{thm:ergopropPoissusp} is the interpretation
of mixing properties of the suspension in terms of  asymptotic behaviours
of the base. Part of these results was known for a long time but we
believe it is worth giving the complete picture.

We then introduce Poissonian joinings, whose structure is completely
clarified in Theorem \ref{thm:structureIDjoinings} and based on strong
connections with joinings at the bases level. These joinings (under
different names) were introduced in parallel in \cite{Lem05ELF} and
in the PhD thesis of the author, with a different, totally probabilistic,
but equivalent definition. In this paper we recall both definitions
and extend the characterization of these joinings. A specific notion
of disjointness (called ID-disjointness), weaker than the usual notion
can then be introduced and provides a probabilistic analogous of the
notion of strong disjointness introduced by Aaronson for infinite
measure preserving transformations. In particular (Theorem \ref{thm:disfortdisID})
we prove that two ergodic systems are strongly disjoint if and only
if their Poisson suspensions are ID-disjoint. Since disjointness implies
ID-disjointness, this result gives a useful {}``finite measure''
criterion of strong disjointness. Surprisingly, ID-disjointness mixed
with the theory of Gaussian dynamical systems allows us to derive
a new result on spectral theory of infinite ergodic measure preserving
transformation: we prove that a whole family of continuous measures
(among them, measures supported on Kronecker sets) cannot be realised
as spectral measures.

In the final chapter, as an illustration of what we have developed
so far, we focus on Poisson suspensions with simple spectrum (in fact,
the results in this chapter are a bit more general) where many nice
structural features appear very clearly, making the functorial relationship
between the base and the suspension even closer. Throughout this work,
we will see many results that are similar to the Gaussian case, proofs
being often different. However, this final chapter shows some drastic
differences between Poisson suspensions and Gaussian systems.

The tools developed in this paper will find further applications on
papers in preparation, one specifically devoted to Poissonian joinings
(\cite{ParRoy07Poisjoin} in collaboration with François Parreau),
the other on the entropy of infinite measure systems (\cite{Jan07entropy},
in collaboration with Elise Janvresse, Tom Meyerovitch and Thierry
de la Rue).

\section{Poisson suspensions}

\subsection{Poisson measure}

Let $\left(X,\mathcal{A},\mu\right)$ be a $\sigma$-finite Lebesgue
space with a continuous infinite measure $\mu$ and consider $\left(X^{*},\mathcal{A}^{*}\right)$
the space of measures on $\left(X,\mathcal{A}\right)$ where $\mathcal{A}^{*}$
is the $\sigma$-algebra generated by the functions $N\left(A\right):\nu\mapsto\nu\left(A\right)$,
$A\in\mathcal{A}$, $\nu\in X^{*}$.

There exists an isomorphism $\varphi$ between $\left(X,\mathcal{A},\mu\right)$
and $\left(\mathbb{R},\mathcal{B},\lambda\right)$ where $\lambda$
is the Lebesgue measure. Let $\widetilde{X^{*}}\subset X^{*}$ be
the measurable set defined by the relations $\nu\in\widetilde{X^{*}}\Longleftrightarrow\forall n\in\mathbb{Z},\nu\left(\varphi^{-1}\left[n,n+1\right]\right)\in\mathbb{N}$.
On $\left(\widetilde{X^{*}},\widetilde{\mathcal{A}^{*}}\right)$,
we can easily form the probability measure $\mu^{*}$ such that for
all finite family of disjoint finite $\mu$-measure sets $A_{1},\dots,A_{k}$,
the $N\left(A_{i}\right)$ are independent and distributed as Poisson
random variables of parameter $\mu\left(A_{i}\right)$. We can extend
$\mu^{*}$ to $\left(X^{*},\mathcal{A}^{*}\right)$ by setting $\mu^{*}\left(X^{*}\setminus\widetilde{X^{*}}\right)=0$.
If we consider that $\mathcal{A}^{*}$ is  completed with respect
to $\mu^{*}$, $\left(X^{*},\mathcal{A}^{*},\mu^{*}\right)$ is a
Lebesgue space and called the Poisson measure over the base $\left(X,\mathcal{A},\mu\right)$.
This construction is independent of the choice of the isomorphism
$\varphi$.

\subsection{\label{sub:Poissonian-map-and}Poissonian map, Poissonian factor,
Fock space structure}

If $\Psi$ is a measure preserving map from $\left(X_{1},\mathcal{A}_{1},\mu_{1}\right)$
to $\left(X_{2},\mathcal{A}_{2},\mu_{2}\right)$, call the map $\Psi_{*}:\nu\mapsto\nu\circ\Psi^{-1}$
a \emph{Poissonian map}. $\Psi_{*}$ is a measure preserving map from
$\left(X_{1}^{*},\mathcal{A}_{1}^{*},\mu_{1}^{*}\right)$ to $\left(X_{2}^{*},\mathcal{A}_{2}^{*},\mu_{2}^{*}\right)$.

Let $T$ be an automorphism of $\left(X,\mathcal{A},\mu\right)$,
the Poissonian map $T_{*}$ is an automorphism of $\left(X^{*},\mathcal{A}^{*},\mu^{*}\right)$
called \emph{Poissonian automorphism}. The dynamical system $\left(X^{*},\mathcal{A}^{*},\mu^{*},T_{*}\right)$
is the \emph{Poisson suspension} over the base $\left(X,\mathcal{A},\mu,T\right)$.

Consider $\mathcal{F}$ a factor of $\mathcal{A}$ (where $\mu$ restricted
to $\mathcal{A}$ is not necessarily $\sigma$-finite), then $\mathcal{F}^{*}:=\sigma\left\{ N\left(F\right),\; F\in\mathcal{F}\right\} $
is a factor of $\mathcal{A}^{*}$ called the Poissonian factor associated
to $\mathcal{F}$. Two factors of $\mathcal{A}$ which don't differ
on positive finite measure sets have the same Poissonian factor.

It is well known that $L^{2}\left(\mu^{*}\right)$ is the Fock space
over $L^{2}\left(\mu\right)$, i.e. $L^{2}\left(\mu^{*}\right)\simeq\mathbb{C}\oplus L^{2}\left(\mu\right)\oplus L^{2}\left(\mu\right)^{\otimes2}\oplus\cdots\oplus L^{2}\left(\mu\right)^{\otimes n}\oplus\cdots$,
the isomorphism being given through multiple stochastic integrals.
Considered as a subspace of $L^{2}\left(\mu^{*}\right)$, $L^{2}\left(\mu\right)^{\otimes n}$
is called the\emph{ $n$-th chaos}.

An operator $\Psi$ with norm less than $1$ on $L^{2}\left(\mu\right)$
gives way to an operator $\widetilde{\Psi}$ on $L^{2}\left(\mu^{*}\right)$
which acts as $\Psi^{\otimes n}$ on $L^{2}\left(\mu\right)^{\otimes n}$,
we call $\widetilde{\Psi}$ the \emph{exponential} of $\Psi$. In
particular, $U_{T_{*}}$ is the exponential of $U_{T}$, therefore,
if $\sigma$ is the maximal spectral type of $T$, $\sum_{k\geq1}\frac{1}{k!}\sigma^{*k}$
is the reduced maximal spectral type of $T_{*}$.

If $\mathcal{F}$ is a $\sigma$-finite factor and $E_{\mathcal{F}}$
the corresponding conditional expectation, it is easy to see that
$E_{\mathcal{F}^{*}}$, the conditional expectation corresponding
to $\mathcal{F}^{*}$ is the\emph{ }exponential\emph{ }of $E_{\mathcal{F}}$.

\section{infinite ergodic theory and mixing properties of suspensions}

\subsection{Categories of dynamical systems}

Consider a system $\left(X,\mathcal{A},\mu,T\right)$ where $\left(X,\mathcal{A},\mu\right)$
is a Lebesgue space and $T$ a measure preserving automorphism.

Most of the following terminology is taken from \cite{KrenSuch69MixInf}.

\begin{defn}
$T$ is said to be:

- \textbf{dissipative}, if there exists a set $W$ such that $T^{n}W\cap T^{m}W=\emptyset$
($W$ is called a \emph{wandering set}) if $n\neq m$ and $X=\bigcup_{n\in\mathbb{Z}}T^{n}W$
mod. $\mu$.

- \textbf{conservative}, if there is no wandering set.

- \textbf{remotely infinite} if there exists a $\sigma$-finite sub-$\sigma$-field
$\mathcal{G}\subset\mathcal{A}$ such that, mod. $\mu$:

\[
T^{-1}\mathcal{G}\subseteq\mathcal{G}\]

\[
\vee_{n\in\mathbb{Z}}T^{-n}\mathcal{G}=\mathcal{A}\]
and

\[
\mathcal{A}_{f}\cap_{n\in\mathbb{Z}}T^{-n}\mathcal{G}=\emptyset\]
 where $\mathcal{A}_{f}$ are the elements of $\mathcal{A}$ of finite
positive $\mu$-measure.

- of \textbf{zero type}, if for all set $A$ of finite measure,

\[
\textrm{lim}_{n\to\infty}\mu\left(A\cap T^{n}A\right)=0\]

- of \textbf{type $\mathbf{II}_{\infty}$} if it is conservative and
if there is no $T$-invariant probability measure $\tilde{\mu}$ such
that $\tilde{\mu}\ll\mu$.

- \textbf{rigid} if there exists a sequence $n_{k}\uparrow\infty$
such that $\textrm{lim}_{k\to\infty}\mu\left(A\bigtriangleup T^{n_{k}}A\right)=0$.
\end{defn}
The notion of rigidity allows to introduce a new notion for \textbf{type
$\mathbf{II}_{\infty}$} systems, parallel to mild mixing for probability
measure preserving systems.

\begin{defn}
Consider a dynamical system $\left(X,\mathcal{A},\mu,T\right)$.

$T$ is said to be \textbf{rigidity free} if for every sequence $n_{k}\uparrow\infty$
and set $A$ of finite measure, $\underline{\textrm{lim}}_{k\to\infty}\mu\left(A\bigtriangleup T^{n_{k}}A\right)>0$.
\end{defn}

\subsection{Mixing properties of Poisson suspensions}

We will make the connections between the above definitions and the
mixing properties of the corresponding Poisson suspensions. First
we need this simple continuity lemma:

\begin{lem}
\label{lem:continuity}Let $\left\{ \mathcal{A}_{n}\right\} _{n\in\mathbb{N}}$
be a collection of $\sigma$-finite sub-$\sigma$-fields of $\mathcal{A}$
such that $\mathcal{A}_{n}\subset\mathcal{A}_{m}$ if $n\geq m$.
We have:

\[
\left(\cap_{n\in\mathbb{N}}\mathcal{A}_{n}\right)^{*}=\cap_{n\in\mathbb{N}}\mathcal{A}_{n}^{*}\]
and, with $\left\{ \mathcal{A}_{n}\right\} _{n\in\mathbb{N}}$ a collection
of $\sigma$-finite sub-$\sigma$-fields of $\mathcal{A}$ such that
$\mathcal{A}_{n}\subset\mathcal{A}_{m}$ if $n\leq m$

\[
\left(\vee_{n\in\mathbb{N}}\mathcal{A}_{n}\right)^{*}=\vee_{n\in\mathbb{N}}\mathcal{A}_{n}^{*}\]

\end{lem}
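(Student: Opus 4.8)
The plan is to exploit the explicit Fock space picture of $L^2(\mu^*)$ together with the fact, recalled in Subsection~\ref{sub:Poissonian-map-and}, that for a $\sigma$-finite factor $\mathcal{F}$ the conditional expectation $E_{\mathcal{F}^*}$ is the exponential of $E_{\mathcal{F}}$. Indeed, a sub-$\sigma$-field $\mathcal{B}\subseteq\mathcal{A}^*$ is determined by the corresponding projection $E_{\mathcal{B}}$ on $L^2(\mu^*)$, and a decreasing (resp. increasing) sequence of $\sigma$-fields $\mathcal{B}_n$ has intersection (resp. join) corresponding to the strong limit of the projections $E_{\mathcal{B}_n}$. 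So the whole statement reduces to two facts about the exponential operation: if $P_n\downarrow P$ strongly on $L^2(\mu)$ (projections onto $L^2(\mathcal{A}_n\text{-part})$ decreasing to the projection onto $\cap_n \mathcal{A}_n$), then $\widetilde{P_n}\downarrow\widetilde{P}$ strongly on $L^2(\mu^*)$; and symmetrically for $P_n\uparrow P$. Both are immediate on each chaos, since $\widetilde{P_n}$ acts as $P_n^{\otimes k}$ on the $k$-th chaos and $P_n^{\otimes k}\to P^{\otimes k}$ strongly (a finite telescoping argument), and then one passes to the whole Fock space by a standard $\varepsilon/3$ argument using that the chaoses span a dense subspace and all operators involved are contractions.

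The first step would be to set up this dictionary carefully: recall that for a $\sigma$-finite $\mathcal{F}$, $L^2(\mathcal{F})\subseteq L^2(\mu)$ is a closed subspace, that $E_{\mathcal{F}}$ is the orthogonal projection onto it, and that $\cap_n\mathcal{F}_n$ corresponds to $\cap_n L^2(\mathcal{F}_n)$ while $\vee_n\mathcal{F}_n$ corresponds to $\overline{\vee_n L^2(\mathcal{F}_n)}$ — with the projections onto these limit spaces being the strong operator limits of $E_{\mathcal{F}_n}$ in the decreasing and increasing case respectively. The same description holds one level up in $L^2(\mu^*)$ for the $\mathcal{A}_n^*$. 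The second step is the key computation: for a monotone sequence of projections $P_n\to P$ strongly, show $\widetilde{P_n}\to\widetilde{P}$ strongly; the third step is to translate back, noting that $\widetilde{E_{\mathcal{F}}}=E_{\mathcal{F}^*}$ identifies the limit projection $\lim_n E_{\mathcal{A}_n^*}$ with $E_{(\lim \mathcal{A}_n)^*}$, which gives the two claimed identities of $\sigma$-fields.

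One subtlety to address is that $\mu$ restricted to an intersection $\cap_n\mathcal{A}_n$ need not be $\sigma$-finite even though each $\mathcal{A}_n$ is — but this is precisely why the statement is phrased with $N(A)$-generated Poissonian factors, for which Subsection~\ref{sub:Poissonian-map-and} already notes that two factors agreeing off positive finite measure sets have the same Poissonian factor; in the purely $L^2$ formulation the projection $P$ onto $L^2(\cap_n\mathcal{A}_n)$ still makes sense as a strong limit of contractions, and $\widetilde P$ is still well-defined, so the argument is unaffected. I expect the main obstacle to be not the functional analysis, which is routine, but making rigorous the passage between the measure-theoretic statement ``$\bigl(\cap_n\mathcal{A}_n\bigr)^* = \cap_n \mathcal{A}_n^*$'' as an equality of $\sigma$-fields and the Hilbert-space statement about projections — one must check that a Poissonian factor is recovered from its associated projection (equivalently, that $\mathcal{A}^*$ is generated as a measure algebra by the random variables $e^{iN(\cdot)}$ lying in the first chaos plus constants), so that equality of projections forces equality of the completed $\sigma$-fields. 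Once that correspondence is in place the rest follows cleanly from monotone convergence of projections through the exponential functor.
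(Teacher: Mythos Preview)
Your proposal is correct and follows essentially the same route as the paper: both arguments identify $E_{\mathcal{A}_n^*}$ with the exponential of $E_{\mathcal{A}_n}$, pass to the limit chaos by chaos via $P_n^{\otimes k}\to P^{\otimes k}$, and conclude by the equality of the limiting projections $E_{\cap_n\mathcal{A}_n^*}=E_{(\cap_n\mathcal{A}_n)^*}$. The paper's version is considerably terser (it says ``weakly'' where you say ``strongly'', which amounts to the same thing for monotone projections) and does not spell out the $\sigma$-finiteness caveat or the $\sigma$-field/projection dictionary that you discuss, but the underlying mechanism is identical.
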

\begin{proof}
First note that $E_{\mathcal{A}_{n}}$ tends to $E_{\cap_{n\in\mathbb{N}}\mathcal{A}_{n}}$
weakly on $L^{2}\left(\mu\right)$ and $E_{\mathcal{A}_{n}^{*}}$
tends to $E_{\cap_{n\in\mathbb{N}}\mathcal{A}_{n}^{*}}$ weakly on
$L^{2}\left(\mu^{*}\right)$. Since $E_{\mathcal{A}_{n}^{*}}$ acts
as $E_{\mathcal{A}_{n}}^{\otimes n}$ on $L^{2}\left(\mu\right)^{\otimes n}$
which tends to $E_{\cap_{n\in\mathbb{N}}\mathcal{A}_{n}}^{\otimes n}$,
we deduce that $E_{\cap_{n\in\mathbb{N}}\mathcal{A}_{n}^{*}}$ coincide
with $E_{\cap_{n\in\mathbb{N}}\mathcal{A}_{n}}^{\otimes n}$ on $L^{2}\left(\mu\right)^{\otimes n}$.
That is $E_{\cap_{n\in\mathbb{N}}\mathcal{A}_{n}^{*}}=E_{\left(\cap_{n\in\mathbb{N}}\mathcal{A}_{n}\right)^{*}}$
which proves the first point. The proof of the second assertion is
similar.
\end{proof}
\begin{thm}
\label{thm:ergopropPoissusp}

$T$ is of type $\mathbf{II}_{\infty}$ on its conservative part iff
$T_{*}$ is ergodic (and then weakly mixing).

$T$ is rigid (for the sequence $n_{k}$) iff $T_{*}$ is rigid (for
the sequence $n_{k}$).

$T$ is rigidity free iff $T_{*}$ is mildly mixing.

$T$ is of zero type iff $T_{*}$ is mixing (and then mixing of any
order).

If $T$ is remotely infinite, then $T_{*}$ is $\mathrm{K}$.

If $T$ is dissipative, then $T_{*}$ is Bernoulli.
\end{thm}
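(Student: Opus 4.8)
The strategy is to exploit the Fock space structure: the unitary $U_{T_*}$ is the exponential of $U_T$, so each spectral/mixing property of $T_*$ decomposes chaos by chaos, reducing everything to tensor powers of the Koopman operator $U_T$ acting on $L^2(\mu)$ (plus the constants in the $0$-th chaos). I would treat the six assertions in roughly this order, since the later ones feed off the earlier ones.

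\textbf{Ergodicity and weak mixing.} First reduce to the conservative case: on the dissipative part there is a wandering set $W$, and $\bigvee_n T^n \mathbf 1_W$ generates there; one checks directly that $T_*$ restricted to the corresponding Poissonian factor is not ergodic (the $L^2(\mu)$-functions supported on the $T$-orbit give a nontrivial invariant, or equivalently the system splits as an infinite independent product, treated in the last assertion). So assume $T$ conservative. By the Fock decomposition, $T_*$ is ergodic iff $1$ is not an eigenvalue of $U_{T}^{\otimes n}$ on $L^2(\mu)^{\otimes n}$ for any $n\ge 1$; I would show this is equivalent to $U_T$ having no eigenvalue of finite multiplicity issue at $1$, and more precisely that a conservative $T$ is type $\mathrm{II}_\infty$ iff $U_T$ has no invariant functions in $L^2(\mu)$ and no invariant functions in any $L^2(\mu)^{\otimes n}$ — the key point being that an $L^2$ invariant function for a tensor power would, by a symmetrization/ergodic-averaging argument and conservativity (Hopf), produce an absolutely continuous invariant probability on the base. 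Weak mixing is then automatic: the same argument applied to $U_{T\times T}$ (whose suspension sits inside a product of suspensions) shows $T_*$ has no non-constant eigenfunctions, since any eigenvalue $\lambda$ of $U_{T_*}$ lives in some chaos and forces $\lambda^{-1}$ to be an eigenvalue of $U_T^{\otimes n}$, impossible by type $\mathrm{II}_\infty$.

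\textbf{Rigidity, rigidity-free/mild mixing, zero type/mixing.} For rigidity: $\mu(A\triangle T^{n_k}A)\to 0$ for all finite-measure $A$ is exactly $U_{T}^{n_k}\to \mathrm{Id}$ strongly on $L^2(\mu)$ (finite-measure indicators are total), and taking exponentials, $U_{T_*}^{n_k}\to\mathrm{Id}$ strongly on the Fock space, i.e. $T_*$ rigid along $n_k$; the converse restricts back to the first chaos. Rigidity-free vs. mild mixing: $T_*$ is mildly mixing iff it has no rigid factor, equivalently (via Fock) $U_{T_*}^{n_k}$ has no nonzero fixed vector for any $n_k\uparrow\infty$ except constants, which translates chaos-by-chaos to: $U_T^{\otimes n}$ has no rigid sequence with a common fixed vector, and a Hopf-type argument reduces this to the first chaos, matching the definition of rigidity-free. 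Zero type vs. mixing: $\mu(A\cap T^nA)\to 0$ is $\langle U_T^n \mathbf 1_A,\mathbf 1_A\rangle\to 0$, i.e. $U_T^n\to 0$ weakly on $L^2(\mu)$; then $U_T^{\otimes n}\to 0$ weakly on each chaos of order $\ge 1$, so $U_{T_*}^n$ tends weakly to the projection onto constants, which is mixing of $T_*$; mixing of all orders follows from the combinatorial structure of joint moments of Poisson integrals (higher correlations factor through pairwise ones, a standard diagram/cumulant computation).

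\textbf{The K and Bernoulli statements — the main obstacles.} If $T$ is remotely infinite with filtration $\mathcal G$, apply Lemma \ref{lem:continuity}: $\mathcal G^*$ is a factor of $\mathcal A^*$ with $T_*^{-1}\mathcal G^*\subseteq \mathcal G^*$, $\bigvee_n T_*^{-n}\mathcal G^* = (\bigvee_n T^{-n}\mathcal G)^* = \mathcal A^*$, and $\bigcap_n T_*^{-n}\mathcal G^* = (\bigcap_n T^{-n}\mathcal G)^*$, which is the trivial $\sigma$-field because $\bigcap_n T^{-n}\mathcal G$ contains no finite-measure set and hence (two factors not differing on positive finite sets have the same Poissonian factor) its Poissonian factor is trivial — so $\mathcal G^*$ is an exhaustive exact filtration, i.e. $T_*$ is K. For dissipative $T$, a wandering set $W$ with $X=\bigsqcup_{n\in\mathbb Z} T^n W$ gives $\mathcal A = \bigvee_n T^n \mathcal A|_W$ with the pieces independent, and the Poisson functor turns this into an independent (over $\mathbb Z$) splitting of $(X^*,\mu^*)$ with $T_*$ acting as the shift, i.e. a Bernoulli shift with state space $(W^*,(\mu|_W)^*)$. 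The hard part throughout is the \emph{direction} from a property of $T_*$ (or from weak data on the base) back to the structural property on the base — specifically showing that an invariant/rigid $L^2$ vector in a higher chaos cannot exist without forcing the corresponding obstruction on the base; this is where conservativity (Hopf's ratio ergodic theorem / Maharam) and the total positivity of the tensor-power operators have to be used carefully rather than invoked. I would isolate this as a lemma: for conservative $T$, $U_T^{\otimes n}$ fixes a nonzero vector iff $U_T$ does, and more generally the rigidity sequences of $U_T^{\otimes n}$ are controlled by those of $U_T$.
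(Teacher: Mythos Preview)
Your Fock-space strategy is sound, and for rigidity, remotely infinite $\Rightarrow$ K, and dissipative $\Rightarrow$ Bernoulli you essentially reproduce the paper's argument (the K case via Lemma~\ref{lem:continuity} exactly as you describe). For ergodicity/weak mixing and zero type/mixing the paper simply cites prior work, so your direct tensor-power arguments actually go further than the text; they are basically correct, except that your opening claim that on the dissipative part ``$T_*$ restricted to the corresponding Poissonian factor is not ergodic'' is false --- the dissipative part gives a Bernoulli suspension, as you yourself note later. The reduction should read: the suspension splits as $(T|_C)_*\times (T|_D)_*$, the second factor is Bernoulli hence weakly mixing, so weak mixing of $T_*$ is equivalent to that of $(T|_C)_*$. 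Also, the lemma you isolate at the end (for conservative $T$, $U_T^{\otimes n}$ fixes a nonzero vector iff $U_T$ does) is correct and elementary --- integrate $|F|^2$ over all but one variable to get a nonzero $T$-invariant $L^1$ function, hence an invariant set of positive finite measure --- and needs no appeal to Hopf.

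The real gap is the mild-mixing equivalence. The paper's proof is purely spectral and different from yours: $T$ rigidity free means the maximal spectral type $\sigma$ charges no weak Dirichlet set; this property is stable under convolution, hence shared by each $\sigma^{*k}$ and therefore by $\sum_{k\ge 1}\frac{1}{k!}\sigma^{*k}$, which is precisely mild mixing of $T_*$. Your route instead asks that a rigid vector for $U_T^{\otimes m}$ along $n_k$ force a rigid vector for $U_T$, and you assert this follows from ``a Hopf-type argument.'' Hopf's ratio ergodic theorem has no evident bearing here. The reduction \emph{can} be carried out --- from a rigid $F$ in the $m$-th chaos one obtains, by Fubini and a compactness extraction on phases, a nonzero $g\in L^2(\mu)$ with $U_T^{n_{k_j}}g\to cg$ for some unimodular $c$, and a further diagonal argument along powers kills the phase --- but this is a spectral/compactness argument, not an ergodic-averaging one, and as written your sketch does not supply it.
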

\begin{proof}
The first and fourth points are apparently due to Marchat in his PhD
dissertation as pointed out by Grabinski in \cite{Grab84Poiss}. We
have given our proof of these facts in \cite{Roy06IDstat}. The sixth
point is folklore.

If $T$ is rigid for the sequence $n_{k}$, any vector of the first
chaos of $T_{*}$ is a rigid vector for the sequence $n_{k}$. But
since the smallest $\sigma$-algebra containing the first chaos is
the whole $\sigma$-algebra $\mathcal{A}^{*}$, $T_{*}$ is rigid
for the sequence $n_{k}$. The converse is obvious.

If $T$ is rigidity free, it means that its maximal spectral type
$\sigma$ doesn't put mass on weak Dirichlet sets. It is easily checked
that this property is also shared by its successive convolution powers,
and then by the measure $\sum_{k\geq1}\frac{1}{k!}\sigma^{*k}$. This
implies that $T_{*}$ is mildly mixing. Once again, the converse is
obvious.

If $T$ is remotely infinite, by taking $\mathcal{G}$ as in the definition,
$\mathcal{G}^{*}$ satisfies: 

\[
T_{*}^{-1}\mathcal{G}^{*}\subseteq\mathcal{G}^{*}\]

\[
\vee_{n\in\mathbb{Z}}T_{*}^{-n}\mathcal{G}^{*}=\mathcal{A}^{*}\]

and\[
\cap_{n\in\mathbb{Z}}T_{*}^{-n}\mathcal{G}^{*}=\left\{ \emptyset,X^{*}\right\} \]

thanks to Lemma \ref{lem:continuity}. Thus $T_{*}$ is $\mathrm{K}$.
\end{proof}

\section{Poissonian joinings}

We present here a natural family of joinings between Poisson suspensions.These
joinings were introduced in \cite{Lem05ELF} and in parallel, with
a different point of view in \cite{Roy05these} under the name \emph{ID
joinings}.

\subsubsection{Infinite divisibility of a Poisson suspension}

Loosely speaking, a probability measure is infinitely divisible (abr.
ID) if it is, for all positive integer $n$, the $n$-th convolution
power of certain probability measure (see \cite{Sato99LevPro} for
infinite divisibility on $\mathbb{R}^{d}$).

Consider two bases $\left(X,\mathcal{A},\mu\right)$ and $\left(Y,\mathcal{B},\nu\right)$.
The sum of two elements in $\left(X^{*},\mathcal{A}^{*}\right)$ and
$\left(Y^{*},\mathcal{B}^{*}\right)$ is defined as the usual sum
of two measures. The convolution operation $*$ is thus well defined
for distributions on $\left(X^{*},\mathcal{A}^{*}\right)$ and $\left(Y^{*},\mathcal{B}^{*}\right)$,
and so is infinite divisibility. The well known formula $\mu^{*}=\left(\left(\frac{1}{n}\mu\right)^{*}\right)^{*n}$for
all $k\geq1$ makes the intrinsic ID nature of a Poisson measure precise.

Infinite divisibility is also well defined on $\left(X^{*}\times Y^{*},\mathcal{A}^{*}\otimes\mathcal{B}^{*}\right)$,
the convolution operation coming from the sum coordinatewise.

\begin{defn}
A \emph{Poissonian joining} of $\left(X^{*},\mathcal{A}^{*},\mu^{*},T_{*}\right)$
and $\left(Y^{*},\mathcal{B}^{*},\nu^{*},S_{*}\right)$ is any joining
$\left(X^{*}\times Y^{*},\mathcal{A}^{*}\otimes\mathcal{B}^{*},\rho,T_{*}\times S_{*}\right)$
such that $\rho$ is ID.
\end{defn}

\subsubsection{KLM measure}

To derive results on the structure of Poissonian joinings of Poisson
suspensions, we need to consider the notion of KLM measure (see \cite{Matthes81IDPP}
and \cite{DaleyVere-Jones88Point}) of an ID point process which plays
exactly the same role as does the Lévy measure for ID processes (see
\cite{Maruyama70IDproc}). KLM measures are measures on $\left(X^{*},\mathcal{A}^{*}\right)$
and their existence is ensured if $\left(X,\mathcal{A}\right)$ is
a complete separable metric space. As it can be checked from the definition
in \cite{Matthes81IDPP}, KLM measures can be extended to the product
space $\left(X^{*}\times X^{*},\mathcal{A}^{*}\otimes\mathcal{A}^{*}\right)$
corresponding to bivariate ID point processes.

Here $\left(X,\mathcal{A}\right)$ stands for the real line or the
disjoint union of two copies of it, endowed with their Borel sets.

If $\mathbb{P}$ is the distribution of an ID point process on $\left(X,\mathcal{A}\right)$,
there exists a unique $\sigma$-finite measure $Q$ on $\left(X^{*},\mathcal{A}^{*}\right)$
such that $Q\left(\mu_{0}\right)=0$ ($\mu_{0}$ denotes the null
measure) which characterizes completely the distribution of an ID
point process through the formula:

${\displaystyle \int_{X^{*}}}\exp\left(-{\displaystyle \int_{X}}f\left(x\right)\nu\left(dx\right)\right)\mathbb{P}\left(d\nu\right)$

\[
=\exp\left[\int_{X^{*}}\left(\exp-{\displaystyle \int_{X}}f\left(x\right)\nu\left(dx\right)\right)-1Q\left(d\nu\right)\right]\]

for any nonnegative function with compact support $f$.

If $m$ is a Radon measure on $\left(X,\mathcal{A}\right)$ and $m^{*}$
the distribution of the Poisson measure over $\left(X,\mathcal{A},m\right)$,
it is easy to see, thanks to the classical formula,

\[
\int_{X^{*}}\exp\left(-{\displaystyle \int_{X}}f\left(x\right)\nu\left(dx\right)\right)m^{*}\left(d\nu\right)=\exp\left[\int_{X}\left(e^{-f\left(x\right)}-1\right)m\left(dx\right)\right]\]

that the KLM measure $Q_{m}$ of $m^{*}$ is the image measure of
$m$ by the application $\Delta:x\to\delta_{x}$ from $\left(X,\mathcal{A}\right)$
to $\left(X_{0}^{*},\mathcal{A}_{0}^{*}\right)$ where $X_{0}^{*}\subset X^{*}$
is the set of Dirac masses. Then we can describe the KLM measure of
a Poisson measure as the measure consisting of one dirac mass {}``randomized''
according to the measure on the base.

\subsubsection{Structure of Poissonian joinings}

We will deduce the structure of a Poissonian joining of distribution
$\mathfrak{P}$ of Poisson suspensions built over $\left(X,\mathcal{A},\mu,T\right)$,
$\left(Y,\mathcal{B},\nu,S\right)$ where $\left(X,\mathcal{A},\mu\right)=\left(Y,\mathcal{B},\nu\right)=\left(\mathbb{R},\mathcal{B},\lambda\right)$.

Let $\mathfrak{P}$ be the distribution of a Poissonian joining between
$\left(X,\mathcal{A},\mu,T\right)$ and $\left(Y,\mathcal{B},\nu,S\right)$.
Call $Q_{\mathfrak{P}}$ the KLM measure of $\mathfrak{P}$ and decompose
it along the $T_{*}\times S_{*}$-invariant subsets $A=X^{*}\times\left\{ \mu_{0}\right\} $,
$B=\left\{ \mu_{0}\right\} \times Y^{*}$ and $C=X^{*}\times Y^{*}\setminus\left(A\cup B\right)$,
thus $Q_{\mathfrak{P}}=Q_{1}\otimes\delta_{\mu_{0}}+\delta_{\mu_{0}}\otimes Q_{2}+\widetilde{Q}$
where the three members of the sum are the restriction to $A$, $B$
and $C$ of $Q_{\mathfrak{P}}$. $Q_{1}$ and $Q_{2}$ are the KLM
measures of point processes on $X^{*}$ and $Y^{*}$ and satisfy $Q_{\mu}=Q_{1}+\widetilde{Q}\circ\pi_{X}^{-1}$
and $Q_{\nu}=Q_{2}+\widetilde{Q}\circ\pi_{Y}^{-1}$.

Since $Q_{1}\ll Q_{\mu}$, $Q_{2}\ll Q_{\nu}$, $\widetilde{Q}\circ\pi_{X}^{-1}\ll Q_{\mu}$
and $\widetilde{Q}\circ\pi_{2}^{-1}\ll Q_{\nu}$ , all of these measures
are supported on the set $\left\{ \delta_{x},\: x\in\mathbb{R}\right\} $
and $\widetilde{Q}$ is supported on the set $\left\{ \delta_{x},\: x\in\mathbb{R}^{2}\right\} $.
Denote by $\mu^{\prime}=Q_{1}\circ\Delta$, $\nu^{\prime}=Q_{2}\circ\Delta$,
$\tilde{\mu}=\left(\widetilde{Q}\circ\pi_{X}^{-1}\right)\circ\Delta$,
$\tilde{\nu}=\left(\widetilde{Q}\circ\pi_{Y}^{-1}\right)\circ\Delta$
and $\gamma=\widetilde{Q}\circ\left(\Delta\times\Delta\right)$.

If $\widetilde{\gamma^{*}}$ denotes the image of $\gamma^{*}$ by
the mapping $\pi_{X}^{*}\times\pi_{Y}^{*}$, we have:

\[
\mathfrak{P}=\left(\mu^{\prime*}\otimes\delta_{\mu_{0}}\right)*\widetilde{\gamma^{*}}*\left(\delta_{\mu_{0}}\otimes\nu^{\prime*}\right)\]

We have proved the following structure theorem:

\begin{thm}
\label{thm:structureIDjoinings}Let $\left(X^{*},\mathcal{A}^{*},\mu^{*},T_{*}\right)$
and $\left(Y^{*},\mathcal{B}^{*},\nu^{*},S_{*}\right)$ be two Poisson
suspensions. Let $\left(X^{*}\times Y^{*},\mathcal{A}^{*}\otimes\mathcal{B}^{*},\mathfrak{P},T_{*}\times S_{*}\right)$
be a Poissonian joining between them (i.e. $\mathfrak{P}$ is ID).
There exists $T$-invariant measures $\mu^{\prime}$ and $\tilde{\mu}$,
$S$-invariant measures $\mu^{\prime}$ and $\tilde{\mu}$ such that
$\mu=\mu^{\prime}+\tilde{\mu}$, $\nu=\nu^{\prime}+\tilde{\nu}$ and
$\gamma$ is a $T_{1}\times T_{2}$-invariant joining of $\tilde{\mu}$
and $\tilde{\nu}$. If $\tilde{\gamma^{*}}$ denotes the image of
$\gamma^{*}$ by the mapping $\pi_{X}^{*}\times\pi_{Y}^{*}$, we have:

\[
\mathfrak{P}=\left(\mu^{\prime*}\otimes\delta_{\mu_{0}}\right)*\widetilde{\gamma^{*}}*\left(\delta_{\mu_{0}}\otimes\nu^{\prime*}\right)\]

\end{thm}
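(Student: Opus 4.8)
The plan is to read the whole structure off the KLM measure $Q_{\mathfrak{P}}$ of $\mathfrak{P}$ (regarded as an ID point process on two disjoint copies of $\mathbb{R}$), using two functorial features of the L\'evy--Khinchin--Matthes representation recalled above: invariance of $\mathfrak{P}$ under $T_{*}\times S_{*}$ transfers to $Q_{\mathfrak{P}}$, and an additive splitting of $Q_{\mathfrak{P}}$ corresponds to a convolution of the associated independent ID point processes.

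First I would check that $Q_{\mathfrak{P}}$ is invariant under the action induced by $T_{*}\times S_{*}$. Indeed $Q_{\mathfrak{P}}$ is, by \cite{Matthes81IDPP}, the unique $\sigma$-finite measure on $\left(X^{*}\times Y^{*},\mathcal{A}^{*}\otimes\mathcal{B}^{*}\right)$ vanishing at $\left(\mu_{0},\mu_{0}\right)$ that reproduces the Laplace functional of $\mathfrak{P}$; pushing $T_{*}\times S_{*}$ through that formula and using invariance of $\mathfrak{P}$ shows $\left(T_{*}\times S_{*}\right)_{*}Q_{\mathfrak{P}}$ has the same two properties, so it equals $Q_{\mathfrak{P}}$. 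Since $Q_{\mathfrak{P}}$ does not charge $\left(\mu_{0},\mu_{0}\right)$, the Borel sets $A=X^{*}\times\left\{\mu_{0}\right\}$, $B=\left\{\mu_{0}\right\}\times Y^{*}$ and $C=X^{*}\times Y^{*}\setminus\left(A\cup B\right)$ partition its support, and each is invariant because $T_{*}\mu_{0}=\mu_{0}$ and $S_{*}\mu_{0}=\mu_{0}$; hence the three restrictions $Q_{1}\otimes\delta_{\mu_{0}}$, $\delta_{\mu_{0}}\otimes Q_{2}$ and $\widetilde{Q}$ in $Q_{\mathfrak{P}}=Q_{1}\otimes\delta_{\mu_{0}}+\delta_{\mu_{0}}\otimes Q_{2}+\widetilde{Q}$ are separately invariant.

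Next I would identify the data. The marginal of $\mathfrak{P}$ on $X^{*}$ is $\mu^{*}$ since $\mathfrak{P}$ is a joining, so comparing the Laplace functionals of that marginal --- noting that the mass of $\delta_{\mu_{0}}\otimes Q_{2}$ projects onto the empty configuration and is invisible --- yields $Q_{\mu}=Q_{1}+\widetilde{Q}\circ\pi_{X}^{-1}$, and symmetrically $Q_{\nu}=Q_{2}+\widetilde{Q}\circ\pi_{Y}^{-1}$. Because $Q_{\mu}$, which the KLM subsection identifies with $\Delta_{*}\mu$, is carried by the Dirac masses, absolute continuity forces $Q_{1}$ and $\widetilde{Q}\circ\pi_{X}^{-1}$, and on the other side $Q_{2}$ and $\widetilde{Q}\circ\pi_{Y}^{-1}$, to be carried by Dirac masses, so $\widetilde{Q}$ lives on $\left\{\left(\delta_{x},\delta_{y}\right):(x,y)\in\mathbb{R}^{2}\right\}$. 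Setting $\mu^{\prime}=Q_{1}\circ\Delta$, $\tilde{\mu}=\left(\widetilde{Q}\circ\pi_{X}^{-1}\right)\circ\Delta$, $\nu^{\prime}=Q_{2}\circ\Delta$, $\tilde{\nu}=\left(\widetilde{Q}\circ\pi_{Y}^{-1}\right)\circ\Delta$ and $\gamma=\widetilde{Q}\circ\left(\Delta\times\Delta\right)$, the two displayed identities become $\mu=\mu^{\prime}+\tilde{\mu}$ and $\nu=\nu^{\prime}+\tilde{\nu}$; $\gamma$ has marginals $\tilde{\mu}$ and $\tilde{\nu}$, hence is a joining of them; and the invariance of $Q_{1},Q_{2},\widetilde{Q}$ together with $T_{*}\delta_{x}=\delta_{Tx}$ and $S_{*}\delta_{y}=\delta_{Sy}$ gives $T$-invariance of $\mu^{\prime}$ and $\tilde{\mu}$, $S$-invariance of $\nu^{\prime}$ and $\tilde{\nu}$, and $\left(T\times S\right)$-invariance of $\gamma$.

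Finally, the convolution formula. Splitting $Q_{\mathfrak{P}}$ into its three summands corresponds, through the L\'evy--Khinchin--Matthes formula (a product of Laplace functionals being the Laplace functional of the sum of independent point processes), to $\mathfrak{P}=\mathfrak{P}_{1}*\mathfrak{P}_{3}*\mathfrak{P}_{2}$, where $\mathfrak{P}_{i}$ is the ID law with KLM measure the $i$-th summand. The first is concentrated on $X^{*}\times\left\{\mu_{0}\right\}$ and on $X^{*}$ is the ID law with KLM measure $Q_{1}=\Delta_{*}\mu^{\prime}$, which by the Poisson Laplace computation of the KLM subsection is $\mu^{\prime*}$; so $\mathfrak{P}_{1}=\mu^{\prime*}\otimes\delta_{\mu_{0}}$, and symmetrically $\mathfrak{P}_{2}=\delta_{\mu_{0}}\otimes\nu^{\prime*}$. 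For the third, $\gamma^{*}$ is the Poisson measure over $\left(X\times Y,\gamma\right)$, whose KLM measure is carried by the single Dirac masses of $X\times Y$; pushing $\gamma^{*}$ forward by the Poissonian map $\pi_{X}^{*}\times\pi_{Y}^{*}$ carries that KLM measure to $\left(\Delta\times\Delta\right)_{*}\gamma=\widetilde{Q}$, so $\mathfrak{P}_{3}=\widetilde{\gamma^{*}}$. Assembling the three factors gives $\mathfrak{P}=\left(\mu^{\prime*}\otimes\delta_{\mu_{0}}\right)*\widetilde{\gamma^{*}}*\left(\delta_{\mu_{0}}\otimes\nu^{\prime*}\right)$. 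I expect the main obstacle to be the careful handling of how KLM measures transform under Poissonian maps and coordinate projections and under additive decomposition --- the facts from \cite{Matthes81IDPP} that do the real work, in particular that marginalisation amounts to discarding exactly the part of $Q_{\mathfrak{P}}$ sitting on configurations with empty first coordinate; everything else is transport of structure through the injection $\Delta$.
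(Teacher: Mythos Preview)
Your proposal is correct and follows essentially the same approach as the paper: decompose the KLM measure $Q_{\mathfrak{P}}$ along the three $T_{*}\times S_{*}$-invariant pieces $A$, $B$, $C$, use the marginal constraint $Q_{\mu}=Q_{1}+\widetilde{Q}\circ\pi_{X}^{-1}$ and absolute continuity to force everything onto Dirac masses, pull back through $\Delta$ to define $\mu',\tilde\mu,\nu',\tilde\nu,\gamma$, and read off the convolution formula. You supply more explicit justification than the paper does (invariance of $Q_{\mathfrak{P}}$, the KLM-additivity/convolution correspondence, and the identification of each convolution factor), but the argument is the same.
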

\begin{rem}
We mention that Poissonian joinings of higher order can be defined
in an obvious way. It is then easily checked that these joinings satisfy
the PID property, i.e. any pairwise independent joining is in fact
independent (two vectors $\left(X_{1},X_{2},\dots,X_{n}\right)$ and
$\left(Y_{1},Y_{2},\dots,Y_{k}\right)$ such that their joint distribution
is ID are independent if and only if for any choice $\left(i,j\right)$,
$X_{i}$ and $Y_{j}$ are independent).
\end{rem}

\subsubsection{Poissonian joinings and Markov operators}

We present the point of view taken in \cite{Lem05ELF}.

A sub-Markov operator between $L^{2}\left(\mu\right)$ and $L^{2}\left(\nu\right)$
is a non-negative operator $\Phi$ such that $\Phi1\leq1$ and $\Phi^{*}1\leq1$.

A Poissonian joining in the sense of \cite{Lem05ELF} is a joining
such that the corresponding Markov operator acting between $L^{2}\left(\mu^{*}\right)$
and $L^{2}\left(\nu^{*}\right)$ acts as the exponential of a sub-Markov
operator $\Phi$ between $L^{2}\left(\mu\right)$ and $L^{2}\left(\nu\right)$
intertwinning $U_{T}$ and $U_{S}$. The author then proves that these
joinings have the structure described in Theorem \ref{thm:structureIDjoinings}
thus defining the same family of joinings.

We will show that the hypothesis that $\Phi$ is a sub-Markov operator
is automatically satisfied if $\Phi^{*}$ is a Markov operator. But
let us prove first a slightly more general result which will be needed
under this form in the last chapter:

\begin{prop}
\label{pro:Markov}Let $\left(X^{*},\mathcal{A}^{*},\mu^{*}\right)$
and $\left(Y^{*},\mathcal{B}^{*},\nu^{*}\right)$ be two Poisson measures
and $\Psi$, a Markov operator from $L^{2}\left(\mu^{*}\right)$ to
$L^{2}\left(\nu^{*}\right)$ which sends $\mathfrak{C}_{X}$, the
first chaos of $L^{2}\left(\mu^{*}\right)$ to $\mathfrak{C}_{Y}$,
the first chaos of $L^{2}\left(\nu^{*}\right)$. Then the restriction
of $\Psi$ to $\mathfrak{C}_{X}$ induces a sub-Markov operator $\Theta$
from $L^{2}\left(\mu\right)$ to $L^{2}\left(\nu\right)$.
\end{prop}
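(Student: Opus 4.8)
The plan is to analyze the action of $\Psi$ on the first chaos directly. Recall that $\mathfrak{C}_X \simeq L^2(\mu)$ via the isometry sending $f \in L^2(\mu)$ to the centered simple stochastic integral $I_1(f) = N(f) - \mu(f)$ when $f \in L^1 \cap L^2$, extended by continuity. Since $\Psi$ sends $\mathfrak{C}_X$ into $\mathfrak{C}_Y$, we may define $\Theta : L^2(\mu) \to L^2(\nu)$ as the composition $J_Y^{-1} \circ (\Psi|_{\mathfrak{C}_X}) \circ J_X$, where $J_X : L^2(\mu) \to \mathfrak{C}_X$ and $J_Y : L^2(\nu) \to \mathfrak{C}_Y$ are the canonical chaos isometries. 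The content to verify is that $\Theta$ is sub-Markov: $\Theta \geq 0$, $\Theta 1 \leq 1$, $\Theta^* 1 \leq 1$.

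First I would record the two features of $\Psi$ as a Markov operator that transfer cleanly. A Markov operator $\Psi$ from $L^2(\mu^*)$ to $L^2(\nu^*)$ is positivity-preserving, unital ($\Psi \mathbf{1} = \mathbf{1}$), and its adjoint is also unital, and moreover $\Psi$ is an $L^1$-$L^\infty$ contraction arising from a joining $\lambda$ of $\mu^*$ and $\nu^*$. The key observation is that $\Psi$, being built from a joining, preserves the decomposition into ``mean'' and ``centered'' parts: $\int_{X^*} I_1(f)\, d\mu^* = 0$, so positivity plus preservation of the total mass functional on functions of the form $c + I_1(f)$ is what we need. I would compute, for $f$ nonnegative, that $\Psi(\mathbf{1} + t\, I_1(f)) = \mathbf{1} + t\, I_1(\Theta f)$, and for small $t$ and $f$ bounded this function is nonnegative on $X^*$; applying $\Psi$ (positive) yields a nonnegative function on $Y^*$, which forces $I_1(\Theta f)$ to be bounded below, and iterating / rescaling shows $\Theta f \geq 0$ $\nu$-a.e. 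This gives positivity of $\Theta$.

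For the contractivity of $\Theta \mathbf{1}$: here ``$\mathbf{1}$'' in $L^2(\mu)$ need not be in $L^2(\mu)$ since $\mu$ is infinite, so one must be careful — the statement $\Theta 1 \le 1$ should be interpreted in the sense that $\Theta$ extends to a suitable larger space or via approximation by $\mathbf{1}_A$ with $\mu(A) < \infty$. I would test against indicator functions of finite-measure sets: for $A$ with $\mu(A) < \infty$, $I_1(\mathbf{1}_A) = N(A) - \mu(A)$, and I would exploit that $\Psi$ applied to functions like $e^{-sN(A)}$ (Laplace-transform type test functions, which live in $L^2(\mu^*) \cap L^\infty$) together with the exponential/chaos structure controls the first-order term; comparing the first chaos components of $\Psi(e^{-sN(A)})$ as $s \to 0$ extracts $\Theta \mathbf{1}_A$ and the Markov bound $0 \le \Psi(e^{-sN(A)}) \le 1$ forces $\Theta \mathbf{1}_A \le \mathbf{1}$ in the appropriate pointwise sense. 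The adjoint bound $\Theta^* \mathbf{1} \le \mathbf{1}$ follows by the symmetric argument applied to $\Psi^*$, which is again a Markov operator sending first chaos to first chaos (since $\Psi$ does and the chaos decomposition is orthogonal, $\Psi^*$ preserves $\mathfrak{C}_Y \to \mathfrak{C}_X$, or one argues directly that $\Psi^*$ maps $\mathfrak{C}_Y$ into the orthocomplement of $\bigoplus_{n \ge 2}$, hence into $\mathbb{C} \oplus \mathfrak{C}_X$, and the constant part is killed because $\Psi^* \mathbf{1} = \mathbf{1}$ is orthogonal to first chaos).

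The main obstacle I anticipate is precisely the bookkeeping around the infinite base measure: $\mathbf{1} \notin L^2(\mu)$, so ``$\Theta \mathbf{1} \le 1$'' and ``$\Theta^* \mathbf{1} \le 1$'' are not literal $L^2$ statements and must be set up via monotone approximation by finite-measure sets, checking that the resulting pointwise limits are well-defined and that positivity of $\Theta$ (already established) makes the family $\Theta \mathbf{1}_A$ monotone in $A$. The cleanest route is to phrase everything through the Laplace functionals $\nu \mapsto \exp(-\int f \, d\nu)$ for $f \ge 0$ of compact support — these are bounded, lie in the right spaces, their first-chaos projections are explicitly $-I_1(f) \exp(-\int f\, d\mu)$ up to normalization, and the Markov inequalities $0 \le \Psi(\cdot) \le 1$ on them are exactly what pins down sub-Markovianity of $\Theta$ after differentiating at $f = 0$. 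Once that dictionary is in place, each of the three sub-Markov conditions drops out mechanically.
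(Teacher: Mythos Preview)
Your overall architecture is right: define $\Theta$ through the chaos isometries, feed the nonnegative variable $N_X(A)=\mu(A)+I_1(1_A)$ through $\Psi$, and read off positivity and the integral bound from the resulting nonnegative element $\mu(A)+I_1(\Theta 1_A)$ of $\mathbb{C}\oplus\mathfrak{C}_Y$. This is exactly the paper's set-up. The gap is in the step where you go from ``$I_1(\Theta f)$ is bounded below'' to ``$\Theta f\ge 0$''. The phrase ``iterating/rescaling'' does not do this: replacing $f$ by $\lambda f$ or by $f1_B$ only reproduces the same inequality $I_1(\Theta f)\ge -\mu(f)$, and no homogeneity argument closes the gap. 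What is actually needed is a structural fact about the first chaos: if $M_Y(g)+b\ge 0$ a.s.\ for some $g\in L^2(\nu)$ and constant $b$, then $g\ge 0$, $g\in L^1(\nu)$, and $b\ge\int g\,d\nu$. The paper proves this by recognising $M_Y(g)+b$ as an infinitely divisible random variable with L\'evy measure $(\nu\circ g^{-1})|_{\mathbb{R}^\star}$ and invoking the L\'evy--Khintchine characterisation of nonnegative ID laws (L\'evy measure supported on $\mathbb{R}_+$, nonnegative drift). This single fact delivers both $\Theta 1_A\ge 0$ \emph{and} the sub-Markov bound $\int_Y\Theta 1_A\,d\nu\le\mu(A)$ in one stroke, from the inequality $c:=\mu(A)-\int\Theta 1_A\,d\nu\ge 0$.

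Your proposed route to the bound $\Theta 1\le 1$ via Laplace functionals $e^{-sN(A)}$ does not work here: $\Psi$ is only assumed to send first chaos to first chaos, not to be an exponential operator, so $\Psi(e^{-sN(A)})$ has uncontrolled components in all higher chaoses and differentiating at $s=0$ does not isolate $\Theta 1_A$. The paper avoids this entirely: once the L\'evy-measure lemma is in hand, everything is read off directly from $\Psi(N_X(A))$, which genuinely lies in $\mathbb{C}\oplus\mathfrak{C}_Y$. Your observation that $\Psi^*$ again sends first chaos to first chaos is correct and is how the paper handles $\Theta^*$.
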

\begin{proof}
$\Theta$ is defined through the identification between $\mathfrak{C}_{X}$
and $L^{2}\left(\mu\right)$, and $\mathfrak{C}_{Y}$ and $L^{2}\left(\nu\right)$.
We have to show it is a positive operator. It will be a consequence
of the following fact: non-negative random variables belonging to
$\mathbb{C}\oplus\mathfrak{C}_{Y}$ are of the form $\int_{Y}fdN+c$
where $f\in L^{1}\left(\nu\right)\cap L^{2}\left(\nu\right)$, $f\geq0$
and $c\geq0$.

Let $N_{X}$ and $N_{Y}$ denote the random measures with distribution
$\mu^{*}$ and $\nu^{*}$. $M_{X}$ and $M_{Y}$ denotes the extended
centered random measures, i.e. $M_{X}\left(g\right):=\lim_{n\to\infty}\left(\int_{Y}g_{n}dN_{Y}-\int_{Y}g_{n}d\nu\right)$
in $L^{2}\left(\nu^{*}\right)$, $g_{n}\in L^{1}\left(\nu\right)\cap L^{2}\left(\nu\right)$,
$g\in L^{2}\left(\nu\right)$, $\lim_{n\to\infty}g_{n}=g$ in $L^{2}\left(\nu\right)$.

First, we recall that the Lévy measure $\rho$ of a positive infinitely
divisible integrable random variable $U$ satisfies $\rho\left(\mathbb{R}_{-}\right)=0$,
$\int_{\mathbb{R}_{+}}u\rho\left(du\right)<\infty$, $\mathbb{E}\left[e^{-\lambda U}\right]=\exp-\left[\lambda a+\int_{\mathbb{R}_{+}}1-e^{-\lambda u}\rho\left(du\right)\right]$
with $a\geq0$ and $\mathbb{E}\left[U\right]=a+\int_{\mathbb{R}_{+}}u\rho\left(du\right)$
(see \cite{Sato99LevPro}, page 285).

Now, an element of $\mathbb{C}\oplus\mathfrak{C}_{Y}$ is of the form
$M_{Y}\left(g\right)+b$ where $M_{Y}\left(g\right)$ is the centered
stochastic integral of the function $g\in L^{2}\left(\nu\right)$
and $b$ a constant. It is an ID random variable without Gaussian
part and its Lévy measure is $\rho:=\left(\nu\circ g^{-1}\right)_{|\mathbb{R}^{\star}}$.
If now $M_{Y}\left(g\right)+b$ is non-negative, we deduce that $\nu\left(g^{-1}\left(\mathbb{R}_{-}^{\star}\right)\right)=\rho\left(\mathbb{R}_{-}^{\star}\right)=0$,
thus $g\geq0$. Since by hypothesis, $M_{Y}\left(g\right)+b\in L^{2}\left(\nu^{*}\right)$,
it is also integrable thus $\int_{\mathbb{R}_{+}}u\rho\left(du\right)=\int_{Y}g\left(y\right)\mu\left(dy\right)<\infty$.
Then $g\in L^{1}\left(\nu\right)\cap L^{2}\left(\nu\right)$ and we
can deduce that $M_{Y}\left(g\right)=\int_{Y}gdN_{Y}-\int_{Y}gd\nu$.
Finally, since $\mathbb{E}\left[M_{Y}\left(g\right)+b\right]=b$,
$b\geq\int_{Y}gd\nu$ and the result follows with $f=g$ and $c=b-\int_{Y}gd\nu$.

It is enough to check the positivity of $\Theta$ on indicator functions
of sets of finite $\mu$-measure. We have, by construction, $\Phi M_{X}\left(1_{A}\right)=M_{Y}\left(\Theta1_{A}\right)$,
but since $1_{A}$ is in $L^{1}\left(\mu\right)\cap L^{2}\left(\mu\right)$,
$M_{X}\left(1_{A}\right)=\int_{X}1_{A}dN_{X}-\int_{X}1_{A}d\mu$ and
$\Phi M_{X}\left(1_{A}\right)=\Phi\int_{X}1_{A}dN_{X}-\Phi\int_{X}1_{A}d\mu$.
$\int_{X}1_{A}dN_{X}$ is a non-negative random variable, then $\Phi\int_{X}1_{A}dN_{X}$
is also non-negative. From the first part of the proof, there exists
$f\in L^{1}\left(\nu\right)\cap L^{2}\left(\nu\right)$, $f\geq0$
and $c\geq0$ such that $\Phi\int_{X}1_{A}dN_{X}=\int_{Y}fdN_{Y}+c$.

Finally, since $\mathbb{E}\left[\Phi\int_{X}1_{A}dN_{X}\right]=\int_{X}1_{A}d\mu=\int_{Y}fd\nu+c$,
$\Phi M_{X}\left(1_{A}\right)=\int_{Y}fdN_{Y}-\int_{Y}fd\nu=M_{Y}\left(f\right)$.
By identification, $\Theta1_{A}=f\geq0$ and since $c\geq0$, $\int_{Y}\Theta1_{A}d\nu\leq\int_{X}1_{A}d\mu$.

Note that it is not true in general that, if $g$ is non-negative
and in $L^{2}\left(\nu\right)$, there exists $b$ such that $M_{Y}\left(g\right)+b$
is non-negative.

By construction, the adjoint $\Theta^{\star}$ satisfies the same
properties as $\Theta$.

$\lambda\left(A\times B\right)=\int_{B}\Theta1_{A}d\nu$ defines a
measure on $\left(X\times Y,\mathcal{A}\otimes\mathcal{B}\right)$
such that its projections satisfies $\lambda\left(\cdot\times Y\right)\leq\mu$
and $\lambda\left(X\times\cdot\right)\leq\nu$. Therefore, $\Theta$
is a sub-Markov operator.
\end{proof}
As a direct consequence of the last proposition, we can now formulate
a theorem analogous to Gaussian joinings of Gaussian dynamical systems:

\begin{thm}
A joining between Poisson suspensions is Poissonian if and only the
associated Markov operator is an exponential.
\end{thm}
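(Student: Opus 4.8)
The statement has two directions. The forward direction (Poissonian $\Rightarrow$ exponential) is essentially the structure theorem: if $\mathfrak{P}$ is ID, then by Theorem \ref{thm:structureIDjoinings} the Markov operator attached to $\mathfrak{P}$ splits according to the decomposition $\mu=\mu'+\tilde\mu$, $\nu=\nu'+\tilde\nu$, and on the first chaos it is induced by the sub-Markov operator $\Phi$ coming from $\gamma$ (together with the deterministic pieces $\mu'$, $\nu'$), while on the $n$-th chaos it acts as $\Phi^{\otimes n}$. This is exactly the content of what \cite{Lem05ELF} calls a Poissonian joining in the Markov-operator sense, so I would simply cite that equivalence, already recalled in the text just before Proposition \ref{pro:Markov}: the author of \cite{Lem05ELF} proves that joinings whose operator is the exponential of an intertwining sub-Markov operator are precisely those with the structure of Theorem \ref{thm:structureIDjoinings}.

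The interesting direction is the converse: if the Markov operator $\Psi$ of a joining between the two Poisson suspensions is an exponential, then the joining is Poissonian, i.e. $\mathfrak{P}$ is ID. The key point is that an exponential operator $\Psi=\widetilde\Phi$ automatically preserves the chaos decomposition, and in particular sends the first chaos $\mathfrak{C}_X$ of $L^2(\mu^*)$ into the first chaos $\mathfrak{C}_Y$ of $L^2(\nu^*)$. Hence Proposition \ref{pro:Markov} applies and tells us that the operator $\Theta$ induced on $L^2(\mu)$ is a genuine sub-Markov operator (not merely a bounded operator), and by the remark at the end of that proof the same holds for its adjoint. Moreover, since $\Psi$ intertwines $U_{T_*}\times$-structure with $U_{S_*}$ and the exponential is functorial, $\Theta$ intertwines $U_T$ and $U_S$. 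So from an exponential Markov operator we recover precisely the data $(\Phi$ sub-Markov, intertwining$)$ that \cite{Lem05ELF} uses to build a Poissonian joining.

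It then remains to check that the joining measure $\mathfrak{P}$ reconstructed from $\Theta$ in the sense of \cite{Lem05ELF} coincides with the original joining, and that it is ID. Coincidence is immediate because a joining of Lebesgue systems is determined by its Markov operator on $L^2$, and both joinings have the same operator $\Psi=\widetilde\Theta$. Infinite divisibility then follows either from the explicit product formula $\mathfrak{P}=(\mu'^*\otimes\delta_{\mu_0})*\widetilde{\gamma^*}*(\delta_{\mu_0}\otimes\nu'^*)$ — each factor is ID, being a Poisson law or the image of a Poisson law — or, more directly, from the fact that an exponential operator $\widetilde\Phi = (\widetilde{\Phi^{(1/n)}})^{\,\widetilde{\ast n}}$ in the appropriate sense, reflecting $\mu^*=((\tfrac1n\mu)^*)^{*n}$, so the joining splits as an $n$-fold convolution for every $n$.

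The step I expect to be the main obstacle is making rigorous the passage \emph{from} the abstract sub-Markov operator $\Theta$ (and its intertwining property) \emph{back to} the measure-theoretic object $\gamma$ and the ID joining $\mathfrak{P}$ — i.e. showing that every intertwining sub-Markov operator on the base is realised by a joining of the form in Theorem \ref{thm:structureIDjoinings}. This is precisely the construction in \cite{Lem05ELF}, so in the paper's economy it should be invoked rather than redone; the only genuinely new input needed here, supplied by Proposition \ref{pro:Markov}, is that the ``sub-Markov'' hypothesis on $\Phi$ is not an extra assumption but is forced as soon as $\Phi^*$ (equivalently $\Psi$) is Markov and chaos-preserving.
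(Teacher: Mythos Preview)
Your proposal is correct and follows the paper's own route: the paper states this theorem as a ``direct consequence'' of Proposition~\ref{pro:Markov} together with the already-cited equivalence from \cite{Lem05ELF}, and your write-up simply unpacks that sentence---the forward direction is the Lema\'nczyk structure result, while the backward direction uses that an exponential operator preserves the first chaos, so Proposition~\ref{pro:Markov} forces the underlying $\Phi$ to be sub-Markov, placing us back in the \cite{Lem05ELF} framework. The only superfluous part is the alternative ID argument via $\widetilde{\Phi}=(\widetilde{\Phi^{(1/n)}})^{\widetilde{*n}}$, which is neither needed nor quite well-formed; the structure formula (or simply the \cite{Lem05ELF} equivalence) already gives infinite divisibility.
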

Note that, from a structural point of view, it is not true in general
that a Poissonian joining is itself (isomorphic to) a Poisson suspension.
We are thus led to seek criteria ensuring that the Poissonian joining
\[
\left(X^{*}\times Y^{*},\mathcal{A}^{*}\otimes\mathcal{B}^{*},\left(\mu^{\prime*}\otimes\delta_{\mu_{0}}\right)*\widetilde{\gamma^{*}}*\left(\delta_{\mu_{0}}\otimes\nu^{\prime*}\right),T_{*}\times S_{*}\right)\]
 is isomorphic to the Poisson suspension \[
\left(\left(X\times Y\right)^{*},\left(\mathcal{A}\otimes\mathcal{B}\right)^{*},\left(\mu^{\prime}\otimes\delta_{0}+\gamma+\delta_{0}\otimes\nu^{\prime}\right)^{*},\left(T\times S\right)^{*}\right)\]
 and not just a factor. This is obviously the case if $\gamma=0$
or if $\gamma$ is a graph joining or a sum of graph joinings. We
prove this result in a non-trivial case which is of particular importance.

\begin{prop}
\label{pro:Poissonjoinings=00003DPoisson}In a suspension $\left(\left(X\times Y\right)^{*},\left(\mathcal{A}\otimes\mathcal{B}\right)^{*},m^{*}\right)$,
if $\mathcal{A}\times Y\cap X\times\mathcal{B}$ is non-atomic mod.
$m$ then $\left(\mathcal{A}\times Y\right)^{*}\vee\left(X\times\mathcal{B}\right)^{*}$
equals $\left(\mathcal{A}\otimes\mathcal{B}\right)^{*}$ mod. $m^{*}$.
Equivalently, in a suspension $\left(X^{*},\mathcal{A}^{*},\mu^{*}\right)$,
$\mathcal{C}_{1}\subset\mathcal{A}$, $\mathcal{C}_{2}\subset\mathcal{A}$
: if $\mathcal{C}_{1}\cap\mathcal{C}_{2}$ is non atomic, then $\mathcal{C}_{1}^{*}\vee\mathcal{C}_{2}^{*}=\left(\mathcal{C}_{1}\vee\mathcal{C}_{2}\right)^{*}$. 
\end{prop}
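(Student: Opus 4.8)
The plan is to work with the equivalent second formulation. Set $\mathcal{C}=\mathcal{C}_1\cap\mathcal{C}_2$ and $\mathcal{C}_1\vee\mathcal{C}_2=\mathcal{D}$. The inclusion $\mathcal{C}_1^*\vee\mathcal{C}_2^*\subseteq(\mathcal{C}_1\vee\mathcal{C}_2)^*$ is immediate from the definition of the Poissonian factor, so the whole content is the reverse inclusion. I would prove it at the level of Fock space: by the Fock-space description recalled in Section~\ref{sub:Poissonian-map-and}, $L^2(\mathcal{D}^*)$ is the Fock space over $L^2(\mathcal{D}|_\mu)$, and the conditional expectation $E_{\mathcal{C}_i^*}$ is the exponential of $E_{\mathcal{C}_i}$. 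So it is enough to show that the closed algebra generated by $\mathcal{C}_1^*$ and $\mathcal{C}_2^*$ contains the first chaos of $\mathcal{D}^*$, i.e.\ that $L^2(\mathcal{D}|_\mu)$ is generated, as a space of random variables over the full $\sigma$-algebra, by $L^2(\mathcal{C}_1|_\mu)$ and $L^2(\mathcal{C}_2|_\mu)$ together with products of such variables — equivalently that $\mathcal{D}^*\subseteq\mathcal{C}_1^*\vee\mathcal{C}_2^*$. The key point is that a generating family for $\mathcal{D}$ (modulo finite-measure sets) is obtained from finite-measure sets $D\in\mathcal{D}$, and each such $D$ can be approximated by finite disjoint unions $\bigcup_j (A_j\cap B_j)$ with $A_j\in\mathcal{C}_1$, $B_j\in\mathcal{C}_2$ of finite measure; so it suffices to recover the single random variable $N(A\cap B)$ from the $\sigma$-algebras $\mathcal{C}_1^*$ and $\mathcal{C}_2^*$.

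The heart of the argument is therefore the following local statement: if $A\in\mathcal{C}_1$, $B\in\mathcal{C}_2$ have finite $\mu$-measure, then $N(A\cap B)$ is $\mathcal{C}_1^*\vee\mathcal{C}_2^*$-measurable. Here I would use the non-atomicity hypothesis on $\mathcal{C}=\mathcal{C}_1\cap\mathcal{C}_2$. Restricted to $A\cup B$ (finite measure), consider the partition of $A\cup B$ into the three pieces $A\setminus B$, $B\setminus A$, $A\cap B$. On the Poisson space, the random vector $\bigl(N(A\setminus B),N(B\setminus A),N(A\cap B)\bigr)$ has independent Poisson coordinates, and $N(A)=N(A\setminus B)+N(A\cap B)$, $N(B)=N(B\setminus A)+N(A\cap B)$. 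The problem is that $N(A\cap B)$ is exactly the overlap that is \emph{not} directly visible from $N(A)$ and $N(B)$ separately. This is where non-atomicity enters: using that $\mathcal{C}=\mathcal{C}_1\cap\mathcal{C}_2$ is non-atomic, I can find, for any $\varepsilon>0$, a finite measurable partition of $X$ into sets $C_1,\dots,C_r\in\mathcal{C}$ each of $\mu$-measure at most $\varepsilon$; then $A\cap C_k\in\mathcal{C}_1$ and $B\cap C_k\in\mathcal{C}_2$ (since $C_k\in\mathcal{C}\subseteq\mathcal{C}_1\cap\mathcal{C}_2$), so the variables $N(A\cap C_k)$ and $N(B\cap C_k)$ are all $\mathcal{C}_1^*\vee\mathcal{C}_2^*$-measurable, and on $C_k$ the set $A\cap B\cap C_k$ sits between $A\cap C_k$ and $B\cap C_k$. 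The point is that $N(A\cap B\cap C_k)=\min\bigl(N(A\cap C_k),N(B\cap C_k)\bigr)$ whenever $C_k$ contains \emph{at most one} point of the configuration outside $A\cap B$ — and as $\varepsilon\to0$ this happens on all but finitely many $C_k$ with probability tending to $1$. Summing over $k$ and letting $\varepsilon\to0$ recovers $N(A\cap B)$ as an a.s.\ limit of $\mathcal{C}_1^*\vee\mathcal{C}_2^*$-measurable functions; a Borel--Cantelli / second-moment estimate controls the error terms.

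Once $N(A\cap B)$ is shown to be $\mathcal{C}_1^*\vee\mathcal{C}_2^*$-measurable for all finite-measure $A\in\mathcal{C}_1$, $B\in\mathcal{C}_2$, I would finish as follows: the collection of finite-measure sets of the form $A\cap B$ generates $\mathcal{D}=\mathcal{C}_1\vee\mathcal{C}_2$ modulo finite-measure sets (finite intersections and countable unions), hence the corresponding counting variables generate $\mathcal{D}^*$; by the previous step all these variables are $\mathcal{C}_1^*\vee\mathcal{C}_2^*$-measurable, so $\mathcal{D}^*\subseteq\mathcal{C}_1^*\vee\mathcal{C}_2^*$, and combined with the trivial reverse inclusion this gives $\mathcal{C}_1^*\vee\mathcal{C}_2^*=(\mathcal{C}_1\vee\mathcal{C}_2)^*$. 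The translation back to the product formulation with $\mathcal{C}_1=\mathcal{A}\times Y$, $\mathcal{C}_2=X\times\mathcal{B}$, $\mathcal{C}_1\cap\mathcal{C}_2=\mathcal{A}\times Y\cap X\times\mathcal{B}$ is then just a matter of bookkeeping. The main obstacle, and the place where the hypothesis is genuinely used, is the local reconstruction of $N(A\cap B)$ from $N(A\cap C_k)$ and $N(B\cap C_k)$: without non-atomicity of $\mathcal{C}_1\cap\mathcal{C}_2$ one cannot shrink the common sets $C_k$, the $\min$ trick fails, and indeed the conclusion is false (e.g.\ if $\mathcal{C}_1\cap\mathcal{C}_2$ has an atom $C$ carrying part of $A\cap B$, the overlap count on $C$ is a genuinely new random variable). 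A secondary technical point is making the limit in $L^2(\mu^*)$ rigorous via an explicit variance bound on $\sum_k\bigl(N(A\cap B\cap C_k)-\min(N(A\cap C_k),N(B\cap C_k))\bigr)$.
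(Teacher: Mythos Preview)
Your approach is correct and genuinely different from the paper's. The paper argues pathwise: it observes that the common factor $\mathcal{F}=\mathcal{A}\times Y\cap X\times\mathcal{B}$ forces $m|_{\mathcal{F}}$ to be supported on a graph between the quotient spaces $X_{/\mathcal{F}}$ and $Y_{/\mathcal{F}}$, and that non-atomicity makes the pushforward measure continuous, so that $m^*$-a.s.\ the projected points $\varphi_1(x_k)$ are all distinct. One can then reconstruct the full configuration $\sum\delta_{(x_k,y_k)}$ from its two projections by matching each $x_i$ to the unique $y_j$ with $(x_i,y_j)$ on the graph. Your argument instead stays at the level of counting variables: you refine by small $\mathcal{C}$-measurable cells $C_k$ and recover $N(A\cap B)$ as an $L^1$-limit of $\sum_k\min\bigl(N(A\cap C_k),N(B\cap C_k)\bigr)$, using that the expected error $\sum_k E[\min(N((A\setminus B)\cap C_k),N((B\setminus A)\cap C_k))]$ is bounded by $\sum_k a_k b_k\le \varepsilon\,\mu(B\setminus A)$. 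The paper's route is more geometric and explains in one stroke \emph{why} the two projections determine the joint configuration; yours is more elementary and quantitative, avoids any discussion of the graph structure of the common factor, and gives an explicit approximating sequence.

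Two small points to tighten. First, you cannot take a \emph{finite} $\mathcal{C}$-partition of $X$ into sets of measure at most $\varepsilon$ since $\mu(X)=\infty$; use a countable one (only finitely many cells meet the configuration inside $A\cup B$ a.s., so the sum is still finite). Second, your refinement step silently uses that $\mu|_{\mathcal{C}}$ is $\sigma$-finite in addition to non-atomic; this holds in the product setting the paper cares about (both coordinate $\sigma$-algebras are $\sigma$-finite and the common factor inherits it), but it is worth stating, since if $\mathcal{C}$ were trivial the conclusion can fail and your $\min$ trick has no cells to work with.
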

\begin{proof}
The problem can be translated into the following one: we have to reconstruct
a Poisson measure on $X\times Y$ knowing its projections on $X$
and $Y$ respectively.

Note that the fact that $\mathcal{F}:=\mathcal{A}\times Y\cap X\times\mathcal{B}$
is not trivial implies that $m_{\diagup\mathcal{F}}$ is supported
on a graph, say $\Phi$ between $\left(X_{\diagup\mathcal{F}},\mathcal{A}_{\diagup\mathcal{F}},\varphi_{1*}\left(m\right)\right)$
and $\left(Y_{\diagup\mathcal{F}},\mathcal{B}_{\diagup\mathcal{F}},\varphi_{2*}\left(m\right)\right)$
, if we denote by $\varphi_{1}$ and $\varphi_{2}$ the factor maps
from $X$ to $X_{\diagup\mathcal{F}}$ and from $Y$ to $Y_{\diagup\mathcal{F}}$
respectively (with a slight abuse, we consider $\mathcal{F}\subset\mathcal{A}$
and $\mathcal{F}\subset\mathcal{B}$ ).

The measure $m$ gives full measure to the set $A$ of points $\left(x,y\right)$
such that $\Phi\circ\varphi_{1}\left(x\right)=\varphi_{2}\left(y\right)$,
therefore $m^{*}$ gives full measure to the set of the points $\nu=\sum_{k\in\mathbb{N}}\delta_{\left(x_{k},y_{k}\right)}\in\left(X\times Y\right)^{*}$
such that, for any $k$, $\left(x_{k},y_{k}\right)\in A$. Moreover,
if $k\neq k^{\prime}$, then $\varphi_{1}\left(x_{k}\right)\neq\varphi_{1}\left(x_{k^{\prime}}\right)$
because if it were not the case, the Poisson measure $\left(\left(X_{\diagup\mathcal{F}}\right)^{*},\left(\mathcal{A}_{\diagup\mathcal{F}}\right)^{*},\left(\varphi_{1*}\left(m\right)\right)^{*}\right)$
would have multiplicities and this is impossible since $\varphi_{1*}\left(m\right)$
is a continuous measure, as $\mathcal{F}$ was assumed non atomic.

Thus, from the projections $\sum_{i\in\mathbb{N}}\delta_{x_{i}}$
and $\sum_{j\in\mathbb{N}}\delta_{y_{j}}$ coming from $\nu=\sum_{k\in\mathbb{N}}\delta_{\left(x_{k},y_{k}\right)}$,
we know how to reconstruct $\nu$ (and this is the only way to do
it) since to every point $x_{i}$ there is exactly one point $x_{j}$
such that $\left(x_{i},y_{j}\right)\in A$ from the preceding discussion.
At last, as we've already said, each pair of point $\left(x_{k},y_{k}\right)$
of $\nu$ satisfy $\left(x_{k},y_{k}\right)\in A$, thus there is
no ambiguity in the reconstruction of $\nu$ from its projections.
This shows that $\left(\mathcal{A}\times Y\right)^{*}\vee\left(X\times\mathcal{B}\right)^{*}$
is nothing else than the whole $\sigma$-algebra $\left(\mathcal{A}\otimes\mathcal{B}\right)^{*}$.
\end{proof}
We can now treat the important case of relatively independent joinings:

\begin{prop}
\label{pro:relweakmixing}Consider two dynamical systems $\left(X,\mathcal{A},\mu,T\right)$
and $\left(Y,\mathcal{B},\nu,S\right)$, with a common $\sigma$-finite
factor $\left(Z,\mathcal{C},m,R\right)$, and consider the relatively
independent joining $\left(X\times Y,\mathcal{A}\otimes\mathcal{B},\mu\otimes_{\mathcal{C}}\nu,T\times S\right)$
over $\left(Z,\mathcal{C},m,R\right)$. Then $\left(X^{*}\times Y^{*},\mathcal{A}^{*}\otimes\mathcal{B}^{*},\widetilde{\left(\mu\otimes_{\mathcal{C}}\nu\right)^{*}},T_{*}\times S_{*}\right)$
is the relatively independent joining of $\left(X^{*},\mathcal{A}^{*},\mu^{*},T_{*}\right)$
and $\left(Y^{*},\mathcal{B}^{*},\nu^{*},S_{*}\right)$ over $\left(Z^{*},\mathcal{C}^{*},m^{*},R_{*}\right)$,
i.e. $\widetilde{\left(\mu\otimes_{\mathcal{C}}\nu\right)^{*}}=\mu^{*}\otimes_{\mathcal{C}^{*}}\nu^{*}$.
Moreover, if $m$ is a continuous measure, it is isomorphic to $\left(\left(X\times Y\right)^{*},\left(\mathcal{A}\otimes\mathcal{B}\right)^{*},\left(\mu\otimes_{\mathcal{C}}\nu\right)^{*},\left(T\times S\right)^{*}\right)$
(this condition is satisfied for example if $T$ is $\mathbf{II}_{\infty}$).

If a Poissonian joining is a relatively independent joining over a
factor $\mathcal{F}$, then $\mathcal{F}$ is a Poissonian factor.
\end{prop}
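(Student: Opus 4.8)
The plan is to reduce all three assertions to operator‑level identities on Fock spaces, using that the operators attached to a Poisson suspension or to a Poissonian factor are \emph{exponentials} and that the exponential is multiplicative on compositions, $\widetilde{AB}=\widetilde{A}\widetilde{B}$. For the first assertion, two joinings of $\mu^{*}$ and $\nu^{*}$ coincide as soon as their Markov operators do, so it is enough to compute both. The measure $\widetilde{\left(\mu\otimes_{\mathcal{C}}\nu\right)^{*}}$ is by construction the image of the Poisson measure over the base joining $\mu\otimes_{\mathcal{C}}\nu$ under the coordinate Poissonian maps, and — exactly as in the computation behind Theorem~\ref{thm:structureIDjoinings} — its Markov operator is the exponential of the Markov operator of $\mu\otimes_{\mathcal{C}}\nu$, namely of $\iota_{Y}\circ E_{\mathcal{C}}$ (conditional expectation onto $\mathcal{C}$ followed by the isometric embedding of $L^{2}(m)$ into $L^{2}(\nu)$). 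The Markov operator of $\mu^{*}\otimes_{\mathcal{C}^{*}}\nu^{*}$ is $\iota_{Y^{*}}\circ E_{\mathcal{C}^{*}}$; since $E_{\mathcal{C}^{*}}$ is the exponential of $E_{\mathcal{C}}$, since the factor map $Y^{*}\to Z^{*}$ is the Poissonian map of the factor map $Y\to Z$ so that $\iota_{Y^{*}}$ is the exponential of $\iota_{Y}$, and since the exponential is multiplicative, this again equals the exponential of $\iota_{Y}\circ E_{\mathcal{C}}$. Hence the two Markov operators agree and the joinings coincide; that $\left(Z^{*},\mathcal{C}^{*},m^{*},R_{*}\right)$ is the common factor and that $T_{*}\times S_{*}$ preserves the joining is just functoriality of the construction.

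For the second assertion, $\widetilde{\left(\mu\otimes_{\mathcal{C}}\nu\right)^{*}}$ on $X^{*}\times Y^{*}$ is by construction the image of the Poisson suspension over $\left(X\times Y,\mathcal{A}\otimes\mathcal{B},\mu\otimes_{\mathcal{C}}\nu,T\times S\right)$ under $\pi_{X}^{*}\times\pi_{Y}^{*}$, hence a factor of it, the one generated by $\left(\mathcal{A}\times Y\right)^{*}\vee\left(X\times\mathcal{B}\right)^{*}$. By Proposition~\ref{pro:Poissonjoinings=00003DPoisson}, applied with $\mathcal{C}_{1}=\mathcal{A}\times Y$, $\mathcal{C}_{2}=X\times\mathcal{B}$ and base measure $\mu\otimes_{\mathcal{C}}\nu$, this factor is the whole $\sigma$-algebra — so the joining is isomorphic to, not merely a factor of, that Poisson suspension — as soon as $\left(\mathcal{A}\times Y\right)\cap\left(X\times\mathcal{B}\right)$ is non-atomic mod $\mu\otimes_{\mathcal{C}}\nu$. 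But for a relatively independent joining one has $\left(\mathcal{A}\times Y\right)\cap\left(X\times\mathcal{B}\right)=\mathcal{C}$ mod $\mu\otimes_{\mathcal{C}}\nu$: if $1_{A\times Y}=1_{X\times B}$ a.e., conditioning on $\mathcal{C}$ gives $E_{\mathcal{C}}[1_{A}]=E_{\mathcal{C}}[1_{A}]E_{\mathcal{C}}[1_{B}]$, whence $E_{\mathcal{C}}[1_{A}]\in\{0,1\}$ and $A\in\mathcal{C}$. This $\sigma$-algebra is non-atomic precisely when $m$ is continuous, which holds in particular when $T$ is $\mathbf{II}_{\infty}$.

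For the last assertion, let $\mathfrak{P}$ be a Poissonian joining which is the relatively independent joining over a common factor $\mathcal{F}$. By the preceding subsection (Proposition~\ref{pro:Markov} and the characterization of Poissonian joinings as those with exponential Markov operator) its Markov operator is $\widetilde{\Phi}$ for a sub-Markov operator $\Phi$ between $L^{2}(\mu)$ and $L^{2}(\nu)$ intertwining $U_{T}$ and $U_{S}$. Relative independence over $\mathcal{F}$ means $M^{*}M=E_{\mathcal{F}}$ on $L^{2}(\mu^{*})$, and $M^{*}M=\widetilde{\Phi^{*}\Phi}$; comparing the two on the first chaos, the operator $\Theta:=\Phi^{*}\Phi$ — which is positivity-preserving, sub-Markov and commutes with $U_{T}$ — is an orthogonal projection on $L^{2}(\mu)$, since an exponential is an orthogonal projection only if its underlying operator is. The plan is then to show that the range $V$ of such a $\Theta$ equals $L^{2}(\mathcal{G})$ for a $T$-invariant sub-$\sigma$-algebra $\mathcal{G}\subseteq\mathcal{A}$. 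First, $V$ is a sublattice: for $f\in V$ the element $h:=\Theta f^{+}-f^{+}$ satisfies $h\ge 0$ (because $\Theta f^{+}\ge\Theta f=f$ and $\Theta f^{+}\ge 0$) and $\Theta h=0$, hence $\|h\|^{2}=\langle h,\Theta f^{+}\rangle-\langle h,f^{+}\rangle=\langle\Theta h,f^{+}\rangle-\langle h,f^{+}\rangle=-\langle h,f^{+}\rangle\le 0$, so $h=0$ and $f^{+}\in V$. Consequently $\mathcal{G}_{0}:=\{A\in\mathcal{A}:\mu(A)<\infty,\ 1_{A}\in V\}$ is a $\sigma$-ring; set $\mathcal{G}=\sigma(\mathcal{G}_{0})$, so that $L^{2}(\mathcal{G})\subseteq V$, and $\mathcal{G}$ is $T$-invariant since $\Theta$ commutes with $U_{T}$. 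Once the reverse inclusion $V\subseteq L^{2}(\mathcal{G})$ is proved, $E_{\mathcal{F}}=\widetilde{\Theta}=\widetilde{E_{\mathcal{G}}}=E_{\mathcal{G}^{*}}$, whence $\mathcal{F}=\mathcal{G}^{*}$ is a Poissonian factor, and $\mathfrak{P}$ is then relatively independent over a Poissonian factor.

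The hard part is precisely this reverse inclusion $V\subseteq L^{2}(\mathcal{G})$, i.e. showing that the sublattice $V$ is the $L^{2}$-closed span of the indicators it contains; this is where the contraction property $\Theta 1\le 1$, inherited from $\Phi$ being sub-Markov, is indispensable, since without it the statement is false in infinite measure (the orthogonal projection onto the span of a positive function that is not a multiple of an indicator is a counterexample). I would attack it by a truncation argument: for $0\le f\in V$ and finite-measure $B$ the functions $nf\wedge\Theta 1_{B}$ lie in $V$, and letting $n\to\infty$ then $B\uparrow X$, dominated convergence against $f\in L^{2}(\mu)$ keeps the limits in $V$, producing elements of the form $p\cdot 1_{\{f>tp\}}$ with $p=\Theta 1\le 1$; the point is then to show, using once more that $\Theta$ is a projection (which makes the relevant level sets stabilize) and $\sigma$-finiteness, that $p$ is an indicator $1_{X_{0}}$ and that $\Theta$ restricts on $X_{0}$ to a genuine conditional expectation, so that $1_{\{f>t\}}\in V$ for almost every $t>0$ and $f$ is $\mathcal{G}$-measurable. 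Making this squeeze rigorous is the crux; everything else is bookkeeping with the Fock-space grading.
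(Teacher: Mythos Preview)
Your treatment of the first two assertions is correct and follows the paper's line exactly: both reduce to the identity $E_{\mathcal{C}^{*}}=\widetilde{E_{\mathcal{C}}}$ together with a direct appeal to Proposition~\ref{pro:Poissonjoinings=00003DPoisson}. Your verification that $(\mathcal{A}\times Y)\cap(X\times\mathcal{B})=\mathcal{C}$ mod $\mu\otimes_{\mathcal{C}}\nu$ is a welcome detail the paper leaves implicit.

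For the third assertion, however, you diverge from the paper and leave a genuine gap. You correctly reduce to showing that the sub-Markov orthogonal projection $\Theta=\Phi^{*}\Phi$ on $L^{2}(\mu)$ is a conditional expectation, but then concede that the reverse inclusion $V\subseteq L^{2}(\mathcal{G})$ is ``the crux'' and do not prove it; your sketched truncation argument is suggestive but not a proof. The paper avoids this difficulty altogether: it first invokes the structure theorem (Theorem~\ref{thm:structureIDjoinings}) to write the Poissonian joining as $(\mu^{\prime*}\otimes\delta_{\mu_{0}})*\widetilde{\gamma^{*}}*(\delta_{\mu_{0}}\otimes\nu^{\prime*})$ and reduces to the case $\mu^{\prime}=\nu^{\prime}=0$, so that $\gamma$ is a genuine joining of $\mu$ and $\nu$ and the base operator $p$ is a \emph{Markov} operator, not merely sub-Markov. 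Since a Markov operator which is an orthogonal projection is automatically a conditional expectation $E_{\mathcal{B}}$, one gets $E_{\mathcal{F}}=\widetilde{E_{\mathcal{B}}}=E_{\mathcal{B}^{*}}$ at once.

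Your direct sub-Markov route can in fact be completed: the paper carries out precisely that analysis later, in the proof of Lemma~\ref{lem:Poisfactor}, by showing that the sub-Markov projection is supported on a $T$-invariant set $S$ on which it restricts to a genuine Markov operator (hence a conditional expectation on a factor $\widetilde{\mathcal{C}}$ of $(S,\mu_{|S})$), yielding a possibly non-$\sigma$-finite factor $\mathcal{C}$ with $\mathrm{Im}\,V=L^{2}(X,\mathcal{C},\mu)$. Either import that argument here, or take the structure-theorem shortcut; as written, your proof of the last assertion is incomplete.
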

\begin{proof}
The Markov operator that describes a relatively independent joining
over a $\sigma$-finite factor is the conditional expectation operator
on this factor (see \cite{Glas03Ergojoin} page 129). It is now sufficient
to recall that the exponential of the conditional expectation of a
factor on the base is the conditional expectation of the corresponding
Poissonian factor (see Section \ref{sub:Poissonian-map-and}). The
second point is a direct application of Proposition \ref{pro:Poissonjoinings=00003DPoisson}.

For the last result, consider a Poissonian joining which is a relatively
independent joining over a factor $\mathcal{F}$. Given the structure
result of the distribution of any Poissonian joining (Theorem \ref{thm:structureIDjoinings}),
we can assume that $\mu^{\prime}$ and $\nu^{\prime}$ are zero. Now,
we only have to remark that a joining (in finite or infinite measure)
is a relatively independent joining if and only if the associated
Markov operator is an orthogonal projection. The Markov operator $E_{\mathcal{F}}$
associated to the Poissonian joining is the exponential of an operator
$p$ associated to the joining on the base. But $E_{\mathcal{F}}$
is an orthogonal projection and preserves the first chaos, thus its
restriction to it, which is identified to $p$ is also an orthogonal
projection and a Markov operator as well. Thus, there exists a factor
$\mathcal{B}$ such that $p=E_{\mathcal{B}}$, but as we have already
seen, the exponential of a conditional expectation is the conditional
expectation on the corresponding Poissonian factor, and we deduce
$\mathcal{F}=\mathcal{B}^{*}$, i.e., $\mathcal{F}$ is a Poissonian
factor.
\end{proof}
\begin{rem*}
It can be shown that Poissonian selfjoinings of weakly mixing (=ergodic,
here) (resp. mildly mixing, resp. mixing, resp. $K$) suspensions
are also weakly mixing (resp. mildly mixing, resp. mixing, resp. $K$).
From the last proposition, we can conclude that a weakly mixing Poisson
suspension is always relatively weakly mixing over its Poissonian
factors.
\end{rem*}

\subsection{An example}

Let us give a concrete example on a particularly simple system. Aaronson
and Nadkarni in \cite{Aar87Eigen} have proved the existence of a
system $\left(X,\mathcal{A},\mu,T\right)$ (said to have \emph{minimal
self joinings}) whose only ergodic self-joinings are of the form $\mu\left(A\cap T^{-k}B\right)$.
It is easy to describe all Poissonian selfjoinings of the corresponding
Poisson suspension. They are of the form:

\[
\mathfrak{P}=\left(\left(a\mu\right)^{*}\otimes\left(a\mu\right)^{*}\right)*\dots*\left(c_{-k}\nu_{-k}\right)^{*}*\dots*\left(c_{0}\nu_{0}\right)^{*}*\dots*\left(c_{k}\nu_{k}\right)^{*}*\dots\]

where $\nu_{k}\left(A\times B\right)=\mu\left(A\cap T^{-k}B\right)$
and ${\displaystyle \sum_{k\in\mathbb{Z}}}c_{k}=1-a$ with $a\geq0$,
$c_{k}\geq0$, $k\in\mathbb{Z}$.

In this case, Poissonian joinings are reduced to the minimum possible.
In this example, the Poissonian centralizer (i.e. Poissonian automorphisms
commuting with $T_{*}$) is trivial; that is, it consists only in
the powers of $T_{*}$.

This example shows that an ergodic Poisson suspension (which always
has the aforementioned selfjoinings) is never simple, indeed, by taking
$c_{1}>0$ and $c_{2}>0$, the corresponding Poissonian joining is
ergodic and not a graph joining nor the product joining.

\subsection{Strong disjointness and ID-disjointness.}

We recall (see \cite{Aar97InfErg}) that a $\left(c_{1},c_{2}\right)$-joining
between $\left(X,\mathcal{A},\mu,T\right)$ and $\left(Y,\mathcal{B},\nu,S\right)$
is a joining between $\left(X,\mathcal{A},c_{1}\mu,T\right)$ and
$\left(Y,\mathcal{B},c_{2}\nu,S\right)$. Two systems are strongly
disjoint if there is no $\left(c_{1},c_{2}\right)$-joining possible.

We have the following links between strong disjointness and ID-disjointness.

\begin{thm}
\label{thm:disfortdisID}Two dynamical systems are strongly disjoint
if their Poisson suspensions are ID-disjoint.

Two ergodic dynamical systems are strongly disjoint if and only if
their associated Poisson suspensions are ID-disjoint.
\end{thm}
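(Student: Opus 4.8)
The plan is to deduce everything from the structure theorem for Poissonian joinings, Theorem~\ref{thm:structureIDjoinings}. Recall that the two suspensions are ID-disjoint precisely when their only Poissonian joining is the product $\mu^{*}\otimes\nu^{*}$; by Theorem~\ref{thm:structureIDjoinings} this amounts to saying that in the decomposition $\mathfrak{P}=(\mu'^{*}\otimes\delta_{\mu_{0}})*\widetilde{\gamma^{*}}*(\delta_{\mu_{0}}\otimes\nu'^{*})$ of \emph{every} Poissonian joining $\mathfrak{P}$ one has $\gamma=0$ (equivalently $\tilde\mu=\tilde\nu=0$, equivalently $\mu'=\mu$ and $\nu'=\nu$). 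Since the ``if'' part of the second assertion is exactly the first assertion applied to ergodic systems, there are two implications to establish: (A) ID-disjointness of the suspensions $\Rightarrow$ strong disjointness of the bases (no ergodicity needed); (B) for ergodic bases, strong disjointness $\Rightarrow$ ID-disjointness of the suspensions.

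For (A) I argue by contraposition. Assume the bases are not strongly disjoint: there is a $(c_{1},c_{2})$-joining, i.e. a $T\times S$-invariant measure $\gamma_{0}$ on $X\times Y$ with marginals $c_{1}\mu$ and $c_{2}\nu$, $c_{1},c_{2}>0$. Replacing $\gamma_{0}$ by $\gamma_{0}/\max(c_{1},c_{2})$ I may assume $0<c_{1},c_{2}\le1$. Put $\tilde\mu=c_{1}\mu\le\mu$, $\tilde\nu=c_{2}\nu\le\nu$, $\mu'=\mu-\tilde\mu$, $\nu'=\nu-\tilde\nu$, $\gamma=\gamma_{0}$, and form $\mathfrak{P}:=(\mu'^{*}\otimes\delta_{\mu_{0}})*\widetilde{\gamma^{*}}*(\delta_{\mu_{0}}\otimes\nu'^{*})$. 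This $\mathfrak{P}$ is a Poissonian joining: it is ID as a convolution of ID measures; it is $T_{*}\times S_{*}$-invariant since each convolution factor is (using that $\mu',\nu'$ are invariant and $\gamma$ is $T\times S$-invariant); and projecting the convolution factor by factor, using $(\alpha+\beta)^{*}=\alpha^{*}*\beta^{*}$ together with the fact that the $X$-projection of $\widetilde{\gamma^{*}}$ is the Poisson measure over the marginal $\tilde\mu$ of $\gamma$, one gets that the marginals of $\mathfrak{P}$ are $\mu^{*}$ and $\nu^{*}$. It is not the product joining: for finite-measure $A,B$ one computes under $\mathfrak{P}$ that $N(A)$ (on $X^{*}$) and $N(B)$ (on $Y^{*}$) have covariance $\gamma(A\times B)$ — the two outer factors contribute nothing by independence, while the middle factor is the image of a Poisson process of intensity $\gamma$ on $X\times Y$, for which $N(A\times Y)$ and $N(X\times B)$ have covariance $\gamma(A\times B)$ — and since $\gamma\ne0$ this is positive for a suitable choice of $A,B$, whereas it vanishes for $\mu^{*}\otimes\nu^{*}$. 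Hence the suspensions are not ID-disjoint.

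For (B), let $\mathfrak{P}$ be an arbitrary Poissonian joining and apply Theorem~\ref{thm:structureIDjoinings}, yielding $T$-invariant $\tilde\mu\le\mu$, $S$-invariant $\tilde\nu\le\nu$ and a $T\times S$-invariant joining $\gamma$ of $\tilde\mu$ and $\tilde\nu$. Since $T$ is ergodic and $\tilde\mu\ll\mu$ is $T$-invariant (with $\mu$ itself $T$-invariant), the density $d\tilde\mu/d\mu$ is $T$-invariant, hence $\mu$-a.e. equal to a constant $c\in[0,1]$; so $\tilde\mu=c\mu$, and likewise $\tilde\nu=c'\nu$ with $c'\in[0,1]$. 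If $c>0$ then $\tilde\mu=c\mu$ has infinite total mass; the total mass of $\gamma$ equals that of either marginal, so $\tilde\nu=c'\nu$ is also infinite, forcing $c'>0$. Then $\gamma$ is a genuine $(c,c')$-joining of $(X,\mathcal{A},\mu,T)$ and $(Y,\mathcal{B},\nu,S)$ with $c,c'>0$, contradicting strong disjointness. Therefore $c=0$, whence $\gamma=0$, $c'=0$, $\mu'=\mu$, $\nu'=\nu$, and $\mathfrak{P}=\mu^{*}\otimes\nu^{*}$. Thus the only Poissonian joining is the product, i.e. the suspensions are ID-disjoint.

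The real content is packaged in Theorem~\ref{thm:structureIDjoinings}; what remains is the two-way dictionary between a $(c_{1},c_{2})$-joining of the bases and the middle term $\widetilde{\gamma^{*}}$ of the structure formula. In direction (A) the only points needing care are that the assembled $\mathfrak{P}$ really has marginals $\mu^{*},\nu^{*}$ and is genuinely non-product (the covariance computation), together with the harmless rescaling bringing $c_{1},c_{2}$ into $(0,1]$. In direction (B) the crux — and the place where ergodicity is essential — is upgrading the a priori merely dominated invariant measures $\tilde\mu,\tilde\nu$ into exact multiples of $\mu,\nu$, so that the structure-theorem joining $\gamma$ becomes a $(c_{1},c_{2})$-joining in Aaronson's sense.
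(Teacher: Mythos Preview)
Your proof is correct and follows essentially the same approach as the paper's: both argue part (A) by contraposition, rescaling a $(c_{1},c_{2})$-joining so that its marginals are dominated by $\mu,\nu$ and then plugging it into the structure formula to build a non-product Poissonian joining; and both deduce part (B) from Theorem~\ref{thm:structureIDjoinings} together with the observation that ergodicity of $T$ and $S$ forces $\tilde\mu=c\mu$, $\tilde\nu=c'\nu$. Your write-up is more detailed than the paper's (explicit verification of the marginals via $(\alpha+\beta)^{*}=\alpha^{*}*\beta^{*}$, the covariance argument for non-triviality, and the check that $c>0$ forces $c'>0$), but the ideas are the same.
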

\begin{proof}
Assume there exists a $\left(c_{1},c_{2}\right)$-joining $\gamma$
between $\left(X,\mathcal{A},\mu,T\right)$ and $\left(Y,\mathcal{B},\nu,S\right)$.
We can assume that $c_{1}\geq c_{2}$ and form the non trivial Poissonian
joining with distribution $\widetilde{\left(\frac{1}{c_{1}}\gamma\right)^{*}}*\left(\delta_{\mu_{0}}\otimes\left(\left(1-\frac{c_{2}}{c_{1}}\right)\nu\right)^{*}\right)$.

If the base systems are ergodic and their suspensions not ID-disjoint
there exists a non trivial Poissonian joining of the form $\left(\mu^{\prime*}\otimes\delta_{\mu_{0}}\right)*\widetilde{\gamma^{*}}*\left(\delta_{\mu_{0}}\otimes\nu^{\prime*}\right)$.
By ergodicity, there exists $a_{1}$ and $a_{2}$ such that $\mu^{\prime}=a_{1}\mu$
and $\nu^{\prime}=a_{2}\nu$, therefore $\gamma$ is a $\left(\left(1-a_{1}\right),\left(1-a_{2}\right)\right)$-joining
between $\left(X,\mathcal{A},\mu,T\right)$ and $\left(Y,\mathcal{B},\nu,S\right)$.
\end{proof}
Since disjointness implies ID-disjointness, a useful immediate corollary
is the following:

\begin{cor}
\label{cor:disjointimpliesstrongdis}Two dynamical systems are strongly
disjoint if their Poisson suspensions are disjoint.
\end{cor}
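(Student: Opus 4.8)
The plan is to obtain the corollary directly from Theorem~\ref{thm:disfortdisID} by observing that disjointness of the Poisson suspensions forces them to be ID-disjoint. Recall that two systems are disjoint, in the sense of Furstenberg, when their only joining is the product measure, whereas two Poisson suspensions are ID-disjoint when their only \emph{Poissonian} joining is the product. Since by definition a Poissonian joining of $(X^*,\mathcal{A}^*,\mu^*,T_*)$ and $(Y^*,\mathcal{B}^*,\nu^*,S_*)$ is just a joining $\mathfrak{P}$ of $T_*\times S_*$ that happens to be ID, every Poissonian joining is in particular a joining, and the class of Poissonian joinings sits inside the class of all joinings.

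First I would check that the product measure $\mu^*\otimes\nu^*$ really is a Poissonian joining, so that ID-disjointness genuinely says ``there is no Poissonian joining other than the product''. This is immediate: in the notation of Theorem~\ref{thm:structureIDjoinings} it is the case $\mu'=\mu$, $\nu'=\nu$, $\gamma=0$; alternatively, $\mu^*\otimes\nu^*$ is visibly ID, either because a Poisson measure is ID by the formula $\mu^*=((\tfrac1n\mu)^*)^{*n}$ together with the fact that convolution on $X^*\times Y^*$ is coordinatewise, or because $\mu^*\otimes\nu^*$ is the Poisson measure built over the disjoint union of the two bases (the counts over $X$ and over $Y$ being independent).

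Then I would argue as follows. Assume $(X^*,\mathcal{A}^*,\mu^*,T_*)$ and $(Y^*,\mathcal{B}^*,\nu^*,S_*)$ are disjoint. Then $\mu^*\otimes\nu^*$ is their only joining; in particular it is their only ID joining, i.e. their only Poissonian joining, so the suspensions are ID-disjoint. Applying the first assertion of Theorem~\ref{thm:disfortdisID} --- ID-disjointness of the suspensions implies strong disjointness of the bases --- we conclude that $(X,\mathcal{A},\mu,T)$ and $(Y,\mathcal{B},\nu,S)$ are strongly disjoint. Note that only the implication not requiring ergodicity is used, so no extra hypothesis is needed.

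There is essentially no real obstacle: the statement is a formal consequence of Theorem~\ref{thm:disfortdisID}. The one point deserving a line of justification is the trivial-but-necessary remark that the product joining lies in the class of Poissonian joinings, so that ``ID-disjoint'' is not vacuous and the chain ``disjoint $\Rightarrow$ ID-disjoint $\Rightarrow$ strongly disjoint'' is meaningful.
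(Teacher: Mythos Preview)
Your argument is correct and is exactly the paper's approach: the paper simply notes that disjointness implies ID-disjointness and then invokes Theorem~\ref{thm:disfortdisID}. Your extra remark that the product measure is itself a Poissonian joining is a harmless (and welcome) sanity check but not something the paper spells out.
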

This corollary is a powerful tool to prove strong disjointness results.
Let us illustrate it by an example. In \cite{Jan07entropy}, Poisson
entropy of a transformation $T$ is defined as the Kolmogorov entropy
of its Poisson suspension. Poisson entropy is shown to coincide, for
quasi-finite systems (see \cite{Kren67Entropy} for the definition),
with Krengel's and Parry's definition of entropy (see \cite{Par69Entropy}),
moreover it is proved a dichotomy for these systems: $T$, quasi-finite
and ergodic, is either a remotely infinite system, or it possesses
a Pinsker ($\sigma$-finite)-factor. It is conjectured that this dichotomy
holds for any system; quasi-finite systems constitutes a very large
class anyway.

We now prove a strong disjointness result for a large class of systems
which includes quasi-finite ones in particular. By Theorem \ref{thm:ergopropPoissusp},
if $T$ is remotely infinite then $T_{*}$ is $K$ and by a well known
result, in finite measure, $K$-systems are disjoint from zero entropy
ones. Now, using the fact that to be not strongly disjoint is an equivalence
relation, Corollary \ref{cor:disjointimpliesstrongdis} implies:

\begin{thm}
If $T$ is a remotely infinite system and if $S$ has a zero Poisson
entropy ($\sigma$-finite) factor, then they are strongly disjoint.
\end{thm}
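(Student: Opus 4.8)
The plan is to string together three facts already at hand. First, since $T$ is remotely infinite, Theorem \ref{thm:ergopropPoissusp} tells us that $T_{*}$ is a $K$-automorphism. Second, let $\left(Z,\mathcal{C},m,R\right)$ be the $\sigma$-finite factor of $\left(Y,\mathcal{B},\nu,S\right)$ of zero Poisson entropy; unwinding the definition of Poisson entropy (the Kolmogorov entropy of the suspension, as in \cite{Jan07entropy}), this means precisely that $R_{*}$ is a probability preserving automorphism of zero entropy. Since, in finite measure, a $K$-automorphism is disjoint from any zero-entropy automorphism, $T_{*}$ and $R_{*}$ are disjoint. By Corollary \ref{cor:disjointimpliesstrongdis}, I then obtain that $T$ and $R$ are strongly disjoint.

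The remaining step is to promote this from the factor $R$ to the full system $S$, and here I would use that being \emph{not} strongly disjoint is an equivalence relation. The system $S$ is not strongly disjoint from its factor $R$: if $\pi:Y\to Z$ denotes the factor map, the graph joining $\gamma\left(A\times B\right)=\nu\left(A\cap\pi^{-1}B\right)$ is a genuine $\left(1,1\right)$-joining of $S$ and $R$ (invariant under $S\times R$, with marginals $\nu$ and $m$). Consequently, if $T$ and $S$ were not strongly disjoint, transitivity of the relation, applied to ``$T$ not strongly disjoint from $S$'' and ``$S$ not strongly disjoint from $R$'', would force $T$ and $R$ to be not strongly disjoint, contradicting the previous paragraph. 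Hence $T$ and $S$ are strongly disjoint.

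I do not expect a genuine obstacle: each of the first two steps is a citation (Theorem \ref{thm:ergopropPoissusp}, the classical $K$ versus zero-entropy disjointness, and Corollary \ref{cor:disjointimpliesstrongdis}), and the last is the short observation about the graph joining. The only point deserving a little care is the direction in which the factor argument is run; if one prefers to avoid quoting transitivity, the same conclusion follows directly by noting that the image under $\mathrm{id}\times\pi$ of any $\left(c_{1},c_{2}\right)$-joining of $T$ and $S$ is a $\left(c_{1},c_{2}\right)$-joining of $T$ and $R$, so a joining witnessing failure of strong disjointness of $T$ and $S$ would witness it for $T$ and $R$ as well.
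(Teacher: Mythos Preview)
Your argument is correct and follows exactly the route sketched in the paper: use Theorem \ref{thm:ergopropPoissusp} to get that $T_{*}$ is $K$, the classical $K$/zero-entropy disjointness to get $T_{*}\perp R_{*}$, Corollary \ref{cor:disjointimpliesstrongdis} to pass to strong disjointness of $T$ and $R$, and then the fact that ``not strongly disjoint'' is an equivalence relation (with $S$ and $R$ linked via the graph joining) to conclude. Your explicit graph-joining/push-forward remarks are a helpful expansion of what the paper only alludes to.
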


\subsection{ID-disjointness and spectral theory}

The definition of ID-disjointness can be extended to other infinitely
divisible systems and notably stationary ID processes as studied in
\cite{Maruyama70IDproc}. A Poisson suspension $\left(X^{*},\mathcal{A}^{*},\mu^{*},T_{*}\right)$
and a stationary ID process $\left(\mathbb{R}^{\mathbb{Z}},\mathcal{B}^{\otimes\mathbb{Z}},\mathbb{P},S\right)$
are ID-disjoint if the only way to join them with an ID distribution
(on $X^{*}\times\mathbb{R}^{\mathbb{Z}}$) is the product measure.

\begin{prop}
\label{pro:IDdisjPoissonGauss}A Poisson suspension and a stationary
Gaussian processes are ID-disjoint.
\end{prop}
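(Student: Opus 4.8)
The plan is to argue at the level of the Lévy/KLM structure of a hypothetical joining, exactly as in the proof of Theorem~\ref{thm:structureIDjoinings}, and then to exploit the fact that a Gaussian component and a Poissonian component cannot interact inside an infinitely divisible pair. Concretely, suppose $\mathbb{Q}$ is an ID distribution on $X^{*}\times\mathbb{R}^{\mathbb{Z}}$ joining the Poisson suspension $(X^{*},\mathcal{A}^{*},\mu^{*},T_{*})$ and the stationary Gaussian process $(\mathbb{R}^{\mathbb{Z}},\mathcal{B}^{\otimes\mathbb{Z}},\mathbb{P},S)$. The first marginal $\mu^{*}$ is an ID point process with no Gaussian part (its Lévy--Khinchine exponent has only the KLM/jump term), while the second marginal $\mathbb{P}$ is a purely Gaussian ID law (its Lévy--Khinchine exponent has only the covariance term, no jumps). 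Since $\mathbb{Q}$ is ID on the product, its Lévy--Khinchine representation decomposes into a Gaussian (covariance) part and a jump (Lévy/KLM) part, and these two parts project onto the corresponding parts of each marginal.

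The key step is the following rigidity observation: because the Poisson marginal has no Gaussian part, the Gaussian part of $\mathbb{Q}$ must be supported on $\{\mu_{0}\}\times\mathbb{R}^{\mathbb{Z}}$, i.e.\ it involves only the Gaussian coordinates; dually, because the Gaussian marginal has no jumps, the jump (KLM) part of $\mathbb{Q}$ must be supported on $X^{*}\times\{0\}$, i.e.\ it involves only the point-process coordinate. Thus the Lévy--Khinchine exponent of $\mathbb{Q}$ splits as a sum of a term depending only on the $X^{*}$-variable and a term depending only on the $\mathbb{R}^{\mathbb{Z}}$-variable, which forces $\mathbb{Q}=\mu^{*}\otimes\mathbb{P}$. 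The technical content is to make precise that the KLM measure $\widetilde{Q}$ on $X^{*}\times\mathbb{R}^{\mathbb{Z}}$ appearing in the joint exponent must have its $\mathbb{R}^{\mathbb{Z}}$-marginal equal to the zero measure (since the Gaussian law contributes no Lévy mass), hence $\widetilde{Q}$ is carried by configurations with trivial Gaussian coordinate, and symmetrically the covariance operator of the Gaussian part of $\mathbb{Q}$ kills the $X^{*}$-directions.

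I would carry this out as follows. First, recall the Lévy--Khinchine representation for an ID law on a product of a space of point configurations and $\mathbb{R}^{\mathbb{Z}}$ (as in \cite{Maruyama70IDproc} for the ID-process side and \cite{Matthes81IDPP} for the point-process side), writing the joint log-Laplace/characteristic functional as a sum of a mean term, a quadratic (Gaussian covariance) term, and an integral against the joint KLM measure. Second, restrict to the two marginals and read off that $\mu^{*}$ forces the quadratic form to vanish on test functions supported in the point-process variable, while $\mathbb{P}$ forces the KLM measure to have zero projection onto the Gaussian coordinates. Third, combine: the joint quadratic form lives purely on the Gaussian block and the joint KLM measure lives purely on the point-process block, so the joint exponent is additively separated, giving independence, i.e.\ $\mathbb{Q}=\mu^{*}\otimes\mathbb{P}$. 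I expect the main obstacle to be purely bookkeeping: setting up the Lévy--Khinchine / KLM formalism on the mixed product space $X^{*}\times\mathbb{R}^{\mathbb{Z}}$ carefully enough that "no Gaussian part of the point-process marginal" and "no jump part of the Gaussian marginal" can be invoked rigorously to annihilate the respective cross terms; once that framework is in place the separation argument is immediate.
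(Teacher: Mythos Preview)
Your strategy is sound and would yield a correct proof, but it is not the route the paper takes. The paper avoids building any L\'evy--Khinchine/KLM formalism on the mixed infinite-dimensional space $X^{*}\times\mathbb{R}^{\mathbb{Z}}$; instead it reduces immediately to finite-dimensional marginals. Concretely, for finitely many sets $A_{1},\dots,A_{k}$ of finite measure and finitely many indices $i_{1},\dots,i_{l}$, the vector $\bigl(N(A_{1}),\dots,N(A_{k}),X_{i_{1}},\dots,X_{i_{l}}\bigr)$ is ID in $\mathbb{R}^{k+l}$ and so splits, by the classical L\'evy--Khinchine formula, as an independent sum $Y+Z$ with $Z$ Gaussian and $Y$ ID without Gaussian part. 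Non-negativity of each $N(A_{i})$ forces the corresponding Gaussian coordinates $Z_{i}$ to vanish; then Cram\'er's theorem (a Gaussian law has only Gaussian convolution factors) forces the remaining coordinates $Y_{j}$ of the purely Poissonian part to be degenerate, since each $X_{i_{j}}=Y_{j}+Z_{j}$ is Gaussian. This gives independence of the two blocks for every finite choice, hence globally.

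Your approach packages the same dichotomy (Gaussian part lives only on the $\mathbb{R}^{\mathbb{Z}}$ side, jump part only on the $X^{*}$ side) directly at the level of the joint characteristic functional, which is conceptually cleaner but, as you note, demands setting up a L\'evy--Khinchine representation on a product of a point-configuration space and a sequence space---extra bookkeeping the paper sidesteps entirely. The paper's route is more elementary (only $\mathbb{R}^{d}$ tools plus Cram\'er), while yours would generalise more readily to other pairs of ID systems once that framework is in place. Note also that your step ``the Gaussian part of $\mathbb{Q}$ kills the $X^{*}$-directions'' ultimately rests on the same fact the paper isolates explicitly via non-negativity of $N(A)$; in your infinite-dimensional formulation you should make that justification concrete rather than leave it as ``the Poisson marginal has no Gaussian part''.
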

\begin{proof}
Assume there exists a joining between a Poisson suspension $N$ and
a stationary Gaussian process $\left\{ X_{n}\right\} _{n\in\mathbb{Z}}$
such that the pair $\left(N,\left\{ X_{n}\right\} _{n\in\mathbb{Z}}\right)$
has an infinitely divisible distribution on $X^{*}\times\mathbb{R}^{\mathbb{Z}}$.
It is clear that, for all finite family $A_{1},\dots,A_{k}\in\mathcal{A}$,
$0<\mu\left(A_{i}\right)<\infty$ , and all finite subset $\left\{ i_{1},\dots,i_{l}\right\} $
of $\mathbb{Z}$, the random vector (in $\mathbb{R}^{k+l}$) $\left\{ N\left(A_{1}\right),\dots,N\left(A_{k}\right),X_{1},\dots,X_{l}\right\} $
is ID. From the Lévy-Khintchine formula, such a vector is always the
independent sum $\left\{ Y_{1},\dots,Y_{k+l}\right\} +\left\{ Z_{1},\dots,Z_{k+l}\right\} $
of an ID random vector without Gaussian part and a Gaussian random
vector. The non-negativity of $N\left(A_{i}\right),i\leq k$ implies
that $Z_{i}=0$, $i\leq k$ and Cramér Theorem implies that $Y_{i}$,
$i>k$ is Gaussian and hence zero. This implies that $\left\{ N\left(A_{1}\right),\dots,N\left(A_{k}\right)\right\} $
and $\left\{ X_{1},\dots,X_{l}\right\} $ are independent. It follows
that $N$ and $\left\{ X_{n}\right\} _{n\in\mathbb{Z}}$ are independent,
thus proving ID-disjointness.
\end{proof}
Surprisingly, ID-disjointness with Gaussian systems will serve to
prove a general result on spectral theory of infinite measure preserving
systems.

Gaussian systems are spectral in nature. They are fully described
by a symmetric measure on the circle and it may happen that some measures
on the circle, appearing as spectral measures of a dynamical system
forces the Gaussian character of the associated stationary process.
These measures are said to possess the {}``Foïa\c{$\mathrm{s}$}
and Str$\breve{\mathrm{a}}$til$\breve{\mathrm{a}}$'' property.

\begin{defn}
(See \cite{LemParThou00Gausselfjoin}) A symmetric measure $\sigma$
on $\left[-\pi,\pi\right[$, possesses the {}``Foïa\c{$\mathrm{s}$}
and Str$\breve{\mathrm{a}}$til$\breve{\mathrm{a}}$'' $\left(FS\right)$
property if, for any ergodic probability preserving dynamical system
$\left(Y,\mathcal{C},\nu,S\right)$, every $f\in L_{0}^{2}\left(\nu,S\right)$
with spectral measure $\sigma$, is a Gaussian random variable.
\end{defn}
Gaussian processes associated to these measures have led to deep and
beautiful results on their structure (see \cite{LemParThou00Gausselfjoin}).

\begin{thm}
\label{thm:Foias}A measure with the $\left(FS\right)$ property is
never the spectral measure of a vector in a $\mathbf{II}_{\infty}$
system.
\end{thm}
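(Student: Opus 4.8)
The plan is to argue by contradiction, producing from a hypothetical vector $f$ with $(FS)$ spectral measure $\sigma$ in a $\mathbf{II}_{\infty}$ system a forbidden joining. Suppose $T$ is a $\mathbf{II}_{\infty}$ automorphism of $(X,\mathcal{A},\mu,T)$ and $f\in L^{2}(\mu)$ has spectral measure $\sigma$ with the $(FS)$ property. By Theorem \ref{thm:ergopropPoissusp}, since $T$ is $\mathbf{II}_{\infty}$ (hence conservative with no absolutely continuous invariant probability, so certainly $\mathbf{II}_{\infty}$ on its conservative part), the Poisson suspension $T_{*}$ is ergodic. The function $f$ sits inside the first chaos $\mathfrak{C}_{X}\subset L^{2}(\mu^{*})$, realized as the centered stochastic integral $M_{X}(f)$, and there it is a square-integrable function over the ergodic probability system $(X^{*},\mathcal{A}^{*},\mu^{*},T_{*})$ whose spectral measure is again $\sigma$ (the first chaos carries the maximal spectral type $\sigma$ of $U_{T}$ intact, as recalled in Section \ref{sub:Poissonian-map-and}).

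**The key step: forcing a Gaussian structure.** Now I invoke the $(FS)$ property directly on the ergodic probability preserving system $(X^{*},\mathcal{A}^{*},\mu^{*},T_{*})$: the function $M_{X}(f)\in L_{0}^{2}(\mu^{*},T_{*})$ has spectral measure $\sigma$, so by the $(FS)$ hypothesis $M_{X}(f)$ must be a Gaussian random variable. But $M_{X}(f)$ is a centered Poissonian stochastic integral: its distribution is infinitely divisible with Lévy measure $(\mu\circ f^{-1})_{|\mathbb{R}^{\star}}$ and no Gaussian component (this is exactly the computation carried out in the proof of Proposition \ref{pro:Markov}). A centered ID random variable with no Gaussian part which is also Gaussian must be identically zero; hence $M_{X}(f)=0$, which forces $\mu\circ f^{-1}$ to be supported at $0$, i.e. $f=0$ $\mu$-a.e. and therefore $\sigma=0$, contradicting the assumption that $\sigma$ is a (nonzero) measure appearing as a spectral measure.

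**Remark on the obstacle.** The only subtlety — and the step I expect to need the most care — is the legitimacy of applying the $(FS)$ property at the level of the suspension: one must be sure that $M_{X}(f)$ genuinely has spectral measure equal to $\sigma$ in $(X^{*},\mathcal{A}^{*},\mu^{*},T_{*})$ and that this system is an ergodic probability preserving system in the sense of the definition of $(FS)$. Ergodicity is Theorem \ref{thm:ergopropPoissusp}; the spectral identification follows because the isomorphism $L^{2}(\mu^{*})\simeq\bigoplus_{n}L^{2}(\mu)^{\otimes n}$ intertwines $U_{T_{*}}$ with $\bigoplus_{n}U_{T}^{\otimes n}$, so the restriction of $U_{T_{*}}$ to the first chaos is unitarily equivalent to $U_{T}$, carrying $f$ to a vector of the same spectral type $\sigma$. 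Once these two points are nailed down the argument is immediate; alternatively, one can phrase the same contradiction through Proposition \ref{pro:IDdisjPoissonGauss}: a nonzero Gaussian vector with spectral measure $\sigma$ inside an ergodic system whose suspension is ID-disjoint from all Gaussian systems cannot coexist with the Poissonian realization of the same $\sigma$, but the direct first-chaos argument above is cleaner and self-contained.
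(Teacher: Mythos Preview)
Your proof is correct and follows essentially the same route as the paper: pass to the Poisson suspension, use Theorem~\ref{thm:ergopropPoissusp} to get ergodicity, observe that the centered stochastic integral $M_X(f)$ in the first chaos has spectral measure $\sigma$, apply the $(FS)$ property to conclude $M_X(f)$ is Gaussian, and derive a contradiction. The only variation is in the last step: the paper invokes Proposition~\ref{pro:IDdisjPoissonGauss} (ID-disjointness of Poisson suspensions and Gaussian processes) to force $M_X(f)=0$, whereas you argue directly that $M_X(f)$ is an ID variable with no Gaussian component (as computed in the proof of Proposition~\ref{pro:Markov}), so Gaussianity forces it to be degenerate --- you mention the ID-disjointness route yourself as an alternative, so there is no real divergence.
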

\begin{proof}
Assume there exists a $\mathbf{II}_{\infty}$ system $\left(X,\mathcal{A},\mu,T\right)$
and a vector $f$ such that $\sigma_{f}$ has the $\left(FS\right)$
property. Consider now $\left(M_{X},\mathcal{M}_{\mathcal{A}},\mathcal{P}_{\mu},\widetilde{T}\right)$,
it is ergodic by Theorem \ref{thm:ergopropPoissusp}. The centered
stochastic integral $M\left(f\right)\in\mathfrak{C}$ has spectral
measure $\sigma_{f}$. The stationary process $\left\{ M\left(f\right)\circ\widetilde{T}^{n}\right\} _{n\in\mathbb{Z}}$
is ergodic and then Gaussian since $\sigma_{f}$ has the $\left(FS\right)$
property. But from Lemma \ref{pro:IDdisjPoissonGauss}, $M\left(f\right)$
is necessarily zero.
\end{proof}
The following corollary can be seen as a slight extension of the Foïa\c{$\mathrm{s}$}
and Str$\breve{\mathrm{a}}$til$\breve{\mathrm{a}}$ property.

\begin{cor}
Let $\left(\Omega,\mathcal{F},\mu,T\right)$ be a conservative dynamical
system and $f\in L^{2}\left(\mu\right)$ with spectral measure $\sigma$
that possesses the $\left(FS\right)$ property, then $f=f1_{\mathfrak{P}}$
mod. $\mu$ where $\mathfrak{P}$ is the part of $\left(\Omega,\mathcal{F},\mu,T\right)$
of type $\mathbf{II}_{1}$.
\end{cor}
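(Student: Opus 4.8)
The plan is to split $f$ along the decomposition of the conservative system into its type $\mathbf{II}_{1}$ part and its type $\mathbf{II}_{\infty}$ part, and to kill the second contribution by Theorem \ref{thm:Foias}. Write $\Omega=\mathfrak{P}\sqcup\mathfrak{N}$ mod $\mu$, where $\mathfrak{P}$ is the largest $T$-invariant set on which $\mu$ is exhausted by $T$-invariant subsets of finite measure (the type $\mathbf{II}_{1}$ part) and $\mathfrak{N}=\Omega\setminus\mathfrak{P}$; both are $T$-invariant. Then $T|_{\mathfrak{N}}$ is conservative, it is non-atomic (an atom of $\mu$ in a conservative system is periodic, hence lies in $\mathfrak{P}$), and it admits no $T$-invariant probability absolutely continuous with respect to $\mu|_{\mathfrak{N}}$ (on each ergodic component of $\mathfrak{N}$ such a measure would be proportional to the infinite measure $\mu$, hence zero). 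Thus $\left(\mathfrak{N},\mathcal{F}|_{\mathfrak{N}},\mu|_{\mathfrak{N}},T|_{\mathfrak{N}}\right)$ is a $\mathbf{II}_{\infty}$ system; if $\mu(\mathfrak{N})=0$ there is nothing to prove, so assume $\mu(\mathfrak{N})>0$.

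Set $g:=f1_{\mathfrak{N}}\in L^{2}\left(\mu|_{\mathfrak{N}}\right)$; establishing $f=f1_{\mathfrak{P}}$ mod $\mu$ is the same as establishing $g=0$. Multiplication by $1_{\mathfrak{N}}$ is an orthogonal projection on $L^{2}\left(\mu\right)$ commuting with $U_{T}$ (since $T^{-1}\mathfrak{N}=\mathfrak{N}$ mod $\mu$), so for every bounded Borel function $\phi$ on the circle, $\int|\phi|^{2}\,d\sigma_{g}=\left\Vert 1_{\mathfrak{N}}\,\phi(U_{T})f\right\Vert ^{2}\leq\left\Vert \phi(U_{T})f\right\Vert ^{2}=\int|\phi|^{2}\,d\sigma$, hence $\sigma_{g}\leq\sigma$. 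The first key point is then that the $\left(FS\right)$ property is inherited by (symmetric) measures dominated by an $\left(FS\right)$ measure, so that $\sigma_{g}$ again has the $\left(FS\right)$ property.

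To see this, let $\left(Y,\mathcal{C},\nu,S\right)$ be ergodic probability preserving and $h\in L_{0}^{2}\left(\nu,S\right)$ with $\sigma_{h}=\sigma_{g}$; we must show $h$ is Gaussian. An $\left(FS\right)$ measure is continuous (an atom would yield, in a suitable ergodic rotation, a non-Gaussian vector of that spectral type), so $\sigma-\sigma_{g}\geq0$ is continuous; let $\left(G,\rho,R\right)$ be the Gaussian dynamical system with maximal spectral type $\sigma-\sigma_{g}$ and generating variable $Z$, which is then weakly mixing. Hence $\left(Y\times G,\nu\times\rho,S\times R\right)$ is ergodic, and in it the variable $h\otimes1+1\otimes Z$ has spectral measure $\sigma_{h}+\sigma_{Z}=\sigma$, so by the $\left(FS\right)$ property of $\sigma$ it is Gaussian. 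Since $1\otimes Z$ is Gaussian and independent of $h\otimes1$, Cram\'er's theorem forces $h\otimes1$, hence $h$, to be Gaussian, which proves the inheritance.

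It only remains to apply Theorem \ref{thm:Foias} to the $\mathbf{II}_{\infty}$ system $\left(\mathfrak{N},\mathcal{F}|_{\mathfrak{N}},\mu|_{\mathfrak{N}},T|_{\mathfrak{N}}\right)$: since $\sigma_{g}$ has the $\left(FS\right)$ property, it cannot be the spectral measure of a non-zero vector of a $\mathbf{II}_{\infty}$ system, while $g$ is a vector of this system with spectral measure $\sigma_{g}$; therefore $g=0$, i.e.\ $f=f1_{\mathfrak{P}}$ mod $\mu$. The one genuinely non-routine step is the heredity of the $\left(FS\right)$ property under domination of measures --- the Gaussian-companion plus Cram\'er argument of the third paragraph; the remainder is just isolating the $\mathbf{II}_{\infty}$ part correctly and quoting Theorem \ref{thm:Foias}.
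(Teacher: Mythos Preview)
Your proof is correct and follows the same strategy as the paper: split $f$ along the $\mathbf{II}_{1}$/$\mathbf{II}_{\infty}$ decomposition, observe that the spectral measure $\sigma_{g}$ of the $\mathbf{II}_{\infty}$ piece satisfies $\sigma_{g}\leq\sigma$, use heredity of the $\left(FS\right)$ property, and conclude by Theorem~\ref{thm:Foias}. The only difference is that the paper simply cites \cite{LemParThou00Gausselfjoin} for the heredity of $\left(FS\right)$ under absolute continuity, whereas you give a self-contained argument (under the stronger hypothesis $\sigma_{g}\leq\sigma$, which is what actually holds here) via the Gaussian companion and Cram\'er's theorem; this is a nice bonus that makes the corollary independent of that reference.
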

\begin{proof}
Let $\mu_{\infty}:=\mu_{|\mathfrak{P}^{c}}$ and $\mu_{f}:=\mu_{|\mathfrak{P}}$.
We have:

$\hat{\sigma}\left(k\right)={\displaystyle \int_{\Omega}}ff\circ T^{k}d\mu$

\[
={\displaystyle \int_{\Omega}}\left(f1_{\mathfrak{P}}\right)\left(f1_{\mathfrak{P}}\right)\circ T^{k}d\mu_{f}+{\displaystyle \int_{\Omega}}\left(f1_{\mathfrak{P^{c}}}\right)\left(f1_{\mathfrak{P^{c}}}\right)\circ T^{k}d\mu_{\infty}=\hat{\sigma}_{f}\left(k\right)+\hat{\sigma}_{\infty}\left(k\right)\]
where $\sigma_{f}+\sigma_{\infty}=\sigma$.

This yields to the relation of absolute continuity $\sigma_{\infty}\ll\sigma$
and as such, $\sigma_{\infty}$ possesses the $\left(FS\right)$ property
too (see \cite{LemParThou00Gausselfjoin}) unless it is the zero measure.
But the system $\left(\Omega,\mathcal{F},\mu_{\infty},T\right)$,
of type $\mathbf{II}_{\infty}$, would have a vector, $f1_{\mathfrak{P^{c}}}$,
with a spectral measure that possesses the $\left(FS\right)$ property.
This is not possible from Theorem \ref{thm:Foias}.  Hence $\sigma_{\infty}$
is zero and so is $f1_{\mathfrak{P^{c}}}$, that is $f=f1_{\mathfrak{P}}$
mod. $\mu$.
\end{proof}

\section{{}``Simple'' Poisson suspensions}

At first sight, a Poisson suspension seems to be a much more complex
object than the base on which it is built. The transformation $T$
sends one point to another while $T_{*}$ maps a countable set of
points to another. The aim of this chapter is to focus on Poisson
suspensions were the gap of complexity is considerably reduced.

First, spectrally, the complexity will be equivalent if both the suspension
and the base have simple spectrum. We will see that simple spectrum
(indeed, slightly less restrictive hypothesis will suffice) will force
the centralizer of both transformation to be the same and force the
natural factors of a Poisson suspension we can think of to be Poissonian
factors. Moreover, the particularly clear structure will enable us
to compare this systems to the apparently similar looking Gaussian
systems.

It is easily deduced that a Poisson suspension will have simple spectrum
if and only if it has simple spectrum on each chaos and if, for all
$m\neq n$, $\sigma^{*m}\perp\sigma^{*n}$. We thus are led to seek
criteria under which these conditions are satisfied.

Ageev in \cite{Ageev99homspec} (see also \cite{Ageev00cartpower})
has proved the following result (we present a reduced version of it):

\begin{thm}
\label{thm:Ageev}Let $U$ be a unitary operator acting on a complex
Hilbert space $H$. Assume $U$ has simple and continuous spectrum
and moreover, that, for countably many $\theta$, $0<\theta<1$, $\theta Id+\left(1-\theta\right)U$
belongs to the weak closure of powers of $U$.

Then, for all $n\geq1$ $U^{\otimes n}$ has simple spectrum on $\mathfrak{S}^{n}H$
(the $n$-th symmetric tensor power of $H$) and the corresponding
maximal spectral types are all pairwise mutually singular.
\end{thm}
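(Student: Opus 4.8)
The plan is to recover from the spectral hypothesis on $U$ a statement about the operators $\theta\,\mathrm{Id}+(1-\theta)U^{\otimes n}$ on the symmetric power, and then to invoke a known criterion (in the spirit of the Gaussian/Poisson spectral results of \cite{LemParThou00Gausselfjoin} and the chaos decomposition recalled in Section \ref{sub:Poissonian-map-and}) which converts ``every $\theta\,\mathrm{Id}+(1-\theta)V$ in the weak closure of powers of $V$, plus simple continuous spectrum'' into simplicity of the symmetric tensor powers of $V$ together with pairwise singularity of their maximal spectral types. Concretely, I would first observe that since $\theta\,\mathrm{Id}+(1-\theta)U$ is a weak limit of powers $U^{n_k}$, applying the (weakly continuous) functor $V\mapsto V^{\otimes n}$ and restricting to $\mathfrak{S}^nH$ shows that $(\theta\,\mathrm{Id}+(1-\theta)U)^{\otimes n}|_{\mathfrak{S}^nH}$ lies in the weak closure of $\{(U^{\otimes n})^{n_k}|_{\mathfrak{S}^nH}\}$. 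One then expands $(\theta\,\mathrm{Id}+(1-\theta)U)^{\otimes n}$ by multilinearity; the point is that for a suitable reparametrisation the operator $\theta'\,\mathrm{Id}+(1-\theta')\,U^{\otimes n}$ on $\mathfrak{S}^nH$ is itself a weak limit of powers of $U^{\otimes n}$, using that the cross terms can be absorbed because $U$ has continuous spectrum and we have a whole countable family of $\theta$'s at our disposal.

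Next I would set up the induction on $n$. For $n=1$ the conclusion is the hypothesis. Assume $U^{\otimes k}$ has simple spectrum on $\mathfrak{S}^kH$ for all $k<n$ with pairwise singular maximal spectral types. Realise $\mathfrak{S}^nH$ inside the Fock space $\bigoplus_k\mathfrak{S}^kH$ — equivalently, think of $U$ as $U_T$ for the base of a Poisson suspension (or of a Gaussian system) and of $U^{\otimes n}$ as acting on the $n$-th chaos. The maximal spectral type of $U^{\otimes n}$ is $\sigma^{*n}$ where $\sigma$ is the (continuous) maximal spectral type of $U$; simple continuous spectrum of $U$ already gives that $\sigma^{*n}$ is continuous, and the lower-chaos types are $\sigma^{*k}$, $k<n$. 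The singularity $\sigma^{*m}\perp\sigma^{*n}$ for $m\neq n$ is exactly the condition isolated just before the statement of Theorem \ref{thm:Ageev} as equivalent to simple spectrum of the suspension, so I would prove it by the standard argument: if $\sigma^{*m}$ and $\sigma^{*n}$ were not mutually singular one could produce a non-trivial eigen-relation contradicting that $\theta\,\mathrm{Id}+(1-\theta)U^{\otimes(m+n)}$, shown in the previous paragraph to be a weak limit of powers, has no eigenvalues other than those forced by continuity — this is where the continuity of $\sigma$ and the density of the countable set of $\theta$'s are both used.

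For the simplicity of $U^{\otimes n}$ on $\mathfrak{S}^nH$ itself, I would use the criterion that a unitary operator with continuous maximal spectral type $\tau$ has simple spectrum as soon as $\theta\,\mathrm{Id}+(1-\theta)V$ lies in the weak closure of powers of $V$ for countably many $\theta$: indeed the weak closure of $\{V^k\}$ then contains a family of operators that already separates the multiplicity function, forcing it to be $1$ $\tau$-a.e. (this is precisely Ageev's argument applied now to $V=U^{\otimes n}$ on $\mathfrak{S}^nH$, which is legitimate because of the first paragraph). Combining, $U^{\otimes n}$ has simple spectrum on $\mathfrak{S}^nH$ and the types $\{\sigma^{*k}\}_{k\geq1}$ are pairwise mutually singular, which is the claim.

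The main obstacle I anticipate is the bookkeeping in the first paragraph: showing that $\theta'\,\mathrm{Id}+(1-\theta')U^{\otimes n}$ really is a weak limit of powers of $U^{\otimes n}$ is not a one-line consequence of functoriality, because $(\theta\,\mathrm{Id}+(1-\theta)U)^{\otimes n}$ contains all the intermediate symmetric products $\mathrm{Id}^{\otimes(n-j)}\otimes U^{\otimes j}$, not just the two extreme terms. Handling this requires either an approximation argument exploiting that $U$'s spectral measure has no atoms (so that averaging over many values of $n_k$ kills the mixed terms in the limit) or a clever choice of the countably many $\theta$'s so that a linear combination of the corresponding limits isolates $\theta'\,\mathrm{Id}+(1-\theta')U^{\otimes n}$; making one of these rigorous, uniformly in $n$, is the crux of the proof. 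Everything downstream — the induction, the singularity of convolution powers, the simplicity — then follows from the now-classical Foia\c{s}--Str\u{a}til\u{a}-type spectral machinery recalled in the paper.
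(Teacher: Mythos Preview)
The paper does not contain a proof of this theorem. Immediately before the statement the author writes: ``Ageev in \cite{Ageev99homspec} (see also \cite{Ageev00cartpower}) has proved the following result (we present a reduced version of it):'' and then simply states Theorem~\ref{thm:Ageev} and moves on. The theorem is quoted as an external input to the paper, used only to guarantee the existence of bases whose Poisson suspensions have simple spectrum. So there is nothing in the paper to compare your attempt against.

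As for your sketch on its own terms: the broad shape is in the right spirit, but the step you yourself flag as the obstacle is genuinely the whole difficulty, and your two suggested fixes do not work as stated. The weak closure of the powers of $U^{\otimes n}$ is not a linear space, so you cannot simply take linear combinations of the limits $(\theta\,\mathrm{Id}+(1-\theta)U)^{\otimes n}$ for different $\theta$'s in order to isolate $\theta'\mathrm{Id}+(1-\theta')U^{\otimes n}$; and ``averaging over many $n_k$ to kill mixed terms'' is not available either, since Ces\`aro means of powers of a unitary with continuous spectrum tend weakly to $0$, not to something useful. Ageev's actual argument in \cite{Ageev99homspec} does not try to put $\theta'\mathrm{Id}+(1-\theta')U^{\otimes n}$ in the weak closure; rather, it exploits directly the operators $(\theta\,\mathrm{Id}+(1-\theta)U)^{\otimes n}$ restricted to $\mathfrak{S}^nH$ --- which \emph{are} in the weak closure --- and analyses how they act on a putative pair of disjoint cyclic subspaces, using the full binomial expansion and a Vandermonde-type independence coming from the countably many values of $\theta$. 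That is also where the pairwise singularity $\sigma^{*m}\perp\sigma^{*n}$ is extracted. If you want to reconstruct the proof you should consult \cite{Ageev99homspec} directly; it is not reproducible from the hints in the present paper.
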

Therefore, if $T$ is such that $U_{T}$ satisfies assumptions of
Theorem \ref{thm:Ageev} then $T_{*}$ has simple spectrum.

It is easy to construct an appropriate rank one transformation $T$
in infinite measure such that $U_{T}$ satisfies assumptions of Theorem
\ref{thm:Ageev}, the existence of a simple spectrum suspension is
thus ensured.

In the sequel, we will often consider a base $\left(X,\mathcal{A},\mu,T\right)$
whose maximal spectral type $\sigma$ satisfies

\begin{equation}
\sigma\perp{\displaystyle \sum_{k=2}^{+\infty}}\frac{1}{k!}\sigma^{*k}\label{eq:singularconvolution}\end{equation}

and will refer to it as satisfying property (\ref{eq:singularconvolution}).

It concerns of course Poisson suspension with simple spectrum.

\subsection{Centralizer}

We begin with a result similar to the Gaussian case:

\begin{prop}
Let $T$ and $S$ with property (\ref{eq:singularconvolution}). Assume
there exists an automorphism $\varphi$ such that $T_{*}\varphi=\varphi S_{*}$,
then $\varphi$ is indeed a Poissonian automorphism . In particular
$C\left(T_{*}\right)\simeq C\left(T\right)$.
\end{prop}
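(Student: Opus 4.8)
The plan is to use the spectral hypothesis~(\ref{eq:singularconvolution}) to locate the first chaos canonically inside $L^2(\mu^*)$, and then to show that the conjugating automorphism $\varphi$ must respect it. First I would set $\Phi := U_\varphi$, the Koopman operator of $\varphi$, a unitary from $L^2(\nu^*)$ onto $L^2(\mu^*)$ intertwining $U_{S_*}$ and $U_{T_*}$. The key point is that, since $U_{T_*}$ is the exponential of $U_T$, the reduced maximal spectral type of $T_*$ is $\sum_{k\geq 1}\frac{1}{k!}\sigma^{*k}$, the first chaos $\mathfrak{C}_X$ carries exactly the spectral type $\sigma$, and the orthogonal complement $\bigoplus_{n\geq 2}L^2(\mu)^{\otimes n}$ of $\mathbb{C}\oplus\mathfrak{C}_X$ carries the spectral type $\sum_{k\geq 2}\frac{1}{k!}\sigma^{*k}$. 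By property~(\ref{eq:singularconvolution}) these two spectral types are mutually singular. The same holds on the $S$-side. Therefore the spectral projection of $U_{T_*}$ onto the part of the spectrum absolutely continuous with respect to $\sigma$ is precisely the orthogonal projection onto $\mathbb{C}\oplus\mathfrak{C}_X$ (the constants being the part supported on the atom at $0$, which $\Phi$ preserves since $\Phi 1 = 1$). Since $\Phi$ intertwines the Koopman operators, it carries spectral projections to spectral projections, hence maps $\mathbb{C}\oplus\mathfrak{C}_Y$ onto $\mathbb{C}\oplus\mathfrak{C}_X$, and thus $\mathfrak{C}_Y$ onto $\mathfrak{C}_X$.

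Once $\Phi$ sends the first chaos onto the first chaos, I would invoke Proposition~\ref{pro:Markov}: the restriction of $\Phi$ to $\mathfrak{C}_Y$ induces, via the identifications $\mathfrak{C}_Y\simeq L^2(\nu)$ and $\mathfrak{C}_X\simeq L^2(\mu)$, a sub-Markov operator $\Theta\colon L^2(\nu)\to L^2(\mu)$; applying the same proposition to $\Phi^{-1}=U_{\varphi^{-1}}$ (also a Koopman operator of an automorphism, so Markov, and also preserving first chaos by the argument above) shows that $\Theta^{-1}$ is sub-Markov as well. A sub-Markov operator whose inverse is sub-Markov must be a Markov \emph{isomorphism}, i.e.\ it is induced by an invertible measure-preserving point transformation: indeed $\Theta$ then preserves both $\int\cdot\,d\mu$ and positivity and is onto, forcing $\Theta 1=1$, $\Theta^*1=1$ and $\Theta$ multiplicative on indicators, so $\Theta = U_\psi$ for some automorphism $\psi$ of $(X,\mathcal{A},\mu)$. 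Moreover $\Theta$ intertwines $U_S$ and $U_T$ because $\Phi$ intertwines $U_{S_*}$ and $U_{T_*}$ and the first-chaos identification is equivariant; hence $\psi$ conjugates $S$ to $T$, i.e.\ $T\psi = \psi S$.

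It then remains to check that $\varphi$ coincides with the Poissonian automorphism $\psi_*$. Both $U_\varphi$ and $U_{\psi_*}$ are unitaries from $L^2(\nu^*)$ to $L^2(\mu^*)$; by construction they agree on $\mathbb{C}\oplus\mathfrak{C}_Y$ (on $\mathfrak{C}_Y$ both act as $\Theta$ under the identification, since $U_{\psi_*}$ is the exponential of $U_\psi=\Theta$ and hence restricts to $\Theta$ on the first chaos). Since $\mathfrak{C}_Y$ generates $\mathcal{B}^*$ as a $\sigma$-algebra (the smallest $\sigma$-algebra making the first-chaos variables measurable is the full $\mathcal{B}^*$, exactly as used in the proof of Theorem~\ref{thm:ergopropPoissusp}), an automorphism is determined mod.\ $\nu^*$ by its action on $\mathfrak{C}_Y$ together with the fact that it sends indicator variables $N(B)$ to indicator variables; a short argument with the exponential formula upgrades agreement on the first chaos to agreement on every chaos $L^2(\nu)^{\otimes n}$, hence on all of $L^2(\nu^*)$, so $\varphi = \psi_*$. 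Taking $S=T$ and $\nu=\mu$ gives $C(T_*) = \{\psi_*: \psi\in C(T)\}\simeq C(T)$, the isomorphism being the Poissonization map $\psi\mapsto\psi_*$, which is injective since $\psi$ is recovered as the point transformation inducing $\Theta=U_{\psi_*}|_{\mathfrak{C}}$.

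\textbf{Main obstacle.} The delicate step is the passage from ``$\Phi$ maps first chaos to first chaos and is a Markov isomorphism there'' to ``$\Phi$ is globally the exponential $U_{\psi_*}$'': one must rule out that $\varphi$ acts on higher chaos in some way not dictated by its action on $\mathfrak{C}_Y$. This is handled by the fact that $\varphi$, being a point transformation of $(Y^*,\mathcal{B}^*,\nu^*)$, already acts geometrically on configurations, so its Koopman operator automatically sends $N(B)$ to $N(\psi^{-1}B)$-type variables once we know its effect on the first chaos; equivalently, the exponential structure of $L^2(\nu^*)$ over $L^2(\nu)$ is preserved by any Koopman operator, and an exponential operator is determined by its first-chaos part. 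Making this rigorous — rather than merely plausible — is where the real work lies, but it is exactly the kind of argument already deployed for Lemma~\ref{lem:continuity} and in the $K$-property proof, so the tools are in hand.
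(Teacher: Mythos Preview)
Your argument is correct and, up through the construction of the base automorphism $\psi$, coincides with the paper's: both use property~(\ref{eq:singularconvolution}) to force $U_\varphi$ to preserve the first chaos, then invoke Proposition~\ref{pro:Markov} to get a sub-Markov operator on the base, and then upgrade it to a point automorphism (the paper uses ``sub-Markov $+$ isometry $\Rightarrow$ comes from an automorphism'', you use ``sub-Markov with sub-Markov inverse''; these are equivalent here since $U_\varphi$ restricts to a unitary between first chaos).

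The endgame differs. Once $N(A)\circ\varphi = N(\psi^{-1}A)$ is established, the paper observes that the bivariate point process $(N(\cdot),N(\psi^{-1}\cdot))$ is manifestly ID, so the graph joining of $\varphi$ is Poissonian; by the characterisation of Poissonian joinings its Markov operator is an exponential, hence $\varphi=\psi_*$. You instead argue directly that $U_\varphi$ and $U_{\psi_*}$ agree on the first chaos and, both being Koopman operators (hence multiplicative), must agree on the algebra generated by the $N(A)$'s and therefore on all of $L^2(\nu^*)$. Your route is more elementary and avoids the joining machinery, but your write-up leaves this last step vague: the appeal to ``the exponential formula'' and to Lemma~\ref{lem:continuity} is not quite the right justification --- what you actually need is the multiplicativity of Koopman operators together with the density of polynomials in the $N(A)$'s, which is immediate. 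The paper's route, by contrast, folds the identification into the Poissonian-joining theory already developed, which is thematically tighter.
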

\begin{proof}
Take $f$ in the first chaos. $f\circ\varphi$ has the same spectral
measure under $S_{*}$ as $f$ under $T_{*}$, thus, since the latter
is absolutely continuous with respect to $\sigma$ and that $\sigma\perp{\displaystyle \sum_{k=2}^{+\infty}}\frac{1}{k!}\sigma^{*k}$,
$f\circ\varphi$ also belongs to the first chaos. This proves that
$\varphi$, seen as a Markov operator on $L^{2}\left(\mu^{*}\right)$,
preserves the first chaos; therefore, thanks to Proposition \ref{pro:Markov}
$\varphi$ induces a sub-Markov operator $\Phi$ on $L^{2}\left(\mu\right)$
that intertwines $T$ and $S$. But $\varphi$ is also an isometry
thus $\Phi$ is an isometry of $L^{2}\left(\mu\right)$ and this implies,
since $\Phi$ is also a sub-Markov operator, that $\Phi$ is indeed
a Markov operator which comes from an automorphism, say $R$. We have
thus proved that, for each, $A\in\mathcal{A}$, $0<\mu\left(A\right)<\infty$:

\[
N\left(A\right)\circ\varphi=N\left(R^{-1}A\right)\]

Under the graph measure on $X^{*}\times X^{*}$ given by $\mu^{*}\left(A\cap\varphi^{-1}B\right)$,
the bivariate Poisson measure takes the form $\left(N\left(\cdot\right),N\left(\cdot\right)\circ\varphi\right)=\left(N\left(\cdot\right),N\left(R^{-1}\left(\cdot\right)\right)\right)$
and this proves that this graph measure is ID. The graph joining given
by $\varphi$ between $T_{*}$ and $S_{*}$ is therefore Poissonian
and we can deduce that $\varphi$, as a Markov operator, is the exponential
of $\Phi$ which is in turn corresponds to $R^{*}$, that is $\varphi=R^{*}$.
\end{proof}

\subsection{Comparison with Gaussian systems}

So far, it was not clear that Poisson suspensions are not (isomorphic
to) Gaussian systems. The next two propositions will show, hopefully,
that we are not reasoning on the same objects. The first one has been
proved by François Parreau (unpublished).

\begin{prop}
A suspension $\left(X^{*},\mathcal{A}^{*},\mu^{*},T_{*}\right)$ with
$T$ satisfying property (\ref{eq:singularconvolution}) is not isomorphic
to a Gaussian system.
\end{prop}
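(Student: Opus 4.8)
The plan is to compare the two systems through their first chaos and the way it sits inside the $L^2$-space, exploiting property (\ref{eq:singularconvolution}) to pin down the first chaos spectrally, and then to derive a contradiction from a distributional feature — ultimately the failure of a Gaussian-type $0$-$1$ law or, more concretely, the fact that a nonzero element of the first Poisson chaos is an infinitely divisible random variable that is \emph{not} Gaussian. First I would suppose for contradiction that there is an isomorphism $\Theta$ from $\left(X^{*},\mathcal{A}^{*},\mu^{*},T_{*}\right)$ onto a Gaussian system $\left(Y^{\mathbb{Z}},\mathcal{G},\mathbb{P},S\right)$, where the Gaussian space $H_{1}\subset L^{2}\left(\mathbb{P}\right)$ is the first Gaussian chaos. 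Under $\Theta$, $U_{T_{*}}$ is conjugate to $U_{S}$, hence the reduced maximal spectral type of the Gaussian system is $\sum_{k\ge1}\frac{1}{k!}\sigma^{*k}$, and by property (\ref{eq:singularconvolution}) the part of $L^{2}$ carrying a spectral measure absolutely continuous with respect to $\sigma$ is, on the Poisson side, exactly the first chaos $\mathfrak{C}_{X}$, while on the Gaussian side it is exactly $H_{1}$ (the higher Gaussian chaoses carry spectral types absolutely continuous with respect to $\sum_{k\ge2}\frac{1}{k!}\sigma^{*k}$, singular to $\sigma$). Therefore $\Theta$ carries $\mathfrak{C}_{X}$ isometrically onto $H_{1}$.

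Next I would transport the probabilistic structure. Pick $A\in\mathcal{A}$ with $0<\mu(A)<\infty$; then the centered integral $M(1_{A})=N(A)-\mu(A)\in\mathfrak{C}_{X}$ is a centered Poisson random variable minus its mean, which is infinitely divisible with a nonzero Lévy measure concentrated on $\mathbb{Z}-\mu(A)\subset\mathbb{R}^{*}$ and \emph{no} Gaussian part, so it is not Gaussian and its distribution is not symmetric. But $\Theta M(1_{A})\in H_{1}$ is a centered Gaussian variable. Since $\Theta$ is induced by a point-map isomorphism, it also intertwines \emph{all} of $L^{\infty}$, in particular it maps the characteristic function $1_{\{N(A)=0\}}\in L^{\infty}(\mu^{*})$ — an idempotent — to an idempotent in $L^{\infty}(\mathbb{P})$, i.e. to the indicator of a measurable set of probability $e^{-\mu(A)}\in(0,1)$; moreover this set is measurable with respect to $\sigma(\Theta M(1_{A}))=\sigma(\Theta N(A))$. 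The key step is to observe that $\{N(A)=0\}$ is $\sigma(N(A))$-measurable and has a \emph{jump} in the distribution (an atom) at the bottom of the support, whereas a Gaussian variable has no atoms and its distribution has full support on a line — so no nonconstant $\sigma(\Theta M(1_A))$-measurable indicator can exist unless $\Theta M(1_A)$ is constant, forcing $\mu(A)=0$, a contradiction. (Equivalently: $\Theta$ must send the sub-$\sigma$-algebra $\sigma(N(A))$ to $\sigma(g)$ for a Gaussian $g$; but $\sigma(N(A))$ is atomic-free only on a half-line and carries the atom $\{N(A)=0\}$, which $\sigma(g)$ cannot reproduce.)

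The main obstacle is making the last step airtight: an isomorphism of measure-preserving systems need not send $\sigma(N(A))$ to the $\sigma$-algebra generated by a \emph{single} Gaussian variable — it sends $\mathfrak{C}_{X}$ to $H_{1}$ as Hilbert spaces, but a one-dimensional subspace of $\mathfrak{C}_{X}$ may go to a one-dimensional subspace of $H_{1}$ spanned by some $g$, and then $\sigma(N(A))\subset\mathfrak{C}_{X}\text{-measurable sets}$ need not be $\sigma(g)$-measurable. The honest route is therefore: since $\Theta$ restricted to $\mathfrak{C}_{X}\to H_{1}$ is a linear isometry, $\Theta N(A)=\mathbb{E}[N(A)]+g_{A}$ with $g_{A}$ a centered Gaussian vector, and then the \emph{joint} law of any finite family $(N(A_{1}),\dots,N(A_{k}))$ — a nonnegative-integer-valued, infinitely divisible, non-Gaussian vector with a nondegenerate Lévy measure — must equal the law of the Gaussian vector $(\mathbb{E}[N(A_{i})]+g_{A_{i}})_{i}$, which is impossible by the Lévy–Khintchine uniqueness of the Gaussian/non-Gaussian decomposition (exactly the Cramér-type argument already used in the proof of Proposition \ref{pro:IDdisjPoissonGauss}). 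This contradiction completes the proof; the delicate point, which I would spell out carefully, is precisely why property (\ref{eq:singularconvolution}) forces $\Theta(\mathfrak{C}_{X})=H_{1}$ rather than merely $\Theta(\mathfrak{C}_{X})\subseteq H_{1}\oplus(\text{higher chaoses})$ — this follows because the higher Gaussian chaoses are orthogonal to $H_1$ and their spectral type is $\sigma$-singular, so any vector in $\Theta(\mathfrak{C}_{X})$, having spectral type $\ll\sigma$, must lie in $H_{1}$, and the isometry then forces equality of dimensions chaos by chaos.
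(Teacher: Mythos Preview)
Your argument is correct and runs in the mirror direction to the paper's. You push a Poisson first-chaos vector to the Gaussian side and argue it must land in $H_{1}$; the paper instead starts on the Gaussian side, notes that the spectral type $m$ of $H_{1}$ cannot be singular to $\sigma$, picks a Gaussian vector $f\in H_{1}$ with spectral measure $\rho\ll\sigma$, transports it to the Poisson side, and uses property~(\ref{eq:singularconvolution}) to see that $f\circ\varphi^{-1}$ lies in the Poisson first chaos. The paper then invokes ID-disjointness with Gaussians (Proposition~\ref{pro:IDdisjPoissonGauss}) for the contradiction, which, when unpacked, is exactly your L\'evy--Khintchine/Cram\'er observation that a nonzero element of the Poisson first chaos cannot be Gaussian.

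Two small points. First, you only need the inclusion $\Theta(\mathfrak{C}_{X})\subseteq H_{1}$, not the equality you assert; the equality would additionally require $m\ll\sigma$, which you have not argued and which is not needed. Second, your claim that the higher Gaussian chaoses have $\sigma$-singular spectral type is true but deserves the one line you promised to ``spell out carefully'': from the isomorphism one has $m\ll\sum_{j\ge1}\sigma^{*j}$, whence $m^{*k}\ll\sum_{l\ge k}\sigma^{*l}\perp\sigma$ for every $k\ge2$ by property~(\ref{eq:singularconvolution}). Your detour through the atom $\{N(A)=0\}$ is, as you recognised, not airtight (the isomorphism need not send $\sigma(N(A))$ to $\sigma(g_{A})$); the ``honest route'' is already the complete proof, and is if anything more self-contained than the paper's version since it avoids the black-box appeal to Proposition~\ref{pro:IDdisjPoissonGauss}.
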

\begin{proof}
Suppose there exists a Gaussian system $\left(Y,\mathcal{Y},\nu,S\right)$
isomorphic to $\left(X^{*},\mathcal{A}^{*},\mu^{*},T_{*}\right)$
by an isomorphism $\varphi$. Call $m$ the maximal spectral type
of the first chaos of $L^{2}\left(\nu\right)$. Due to the assumptions
on the spectral structure of $L^{2}\left(\mathcal{P}_{\mu}\right)$,
we can't have $m\perp\sigma$. So, consider $\rho$ such that $\rho\ll m$
and $\rho\ll\sigma$. Take $f$, a vector in the first chaos of $L^{2}\left(\nu\right)$
with spectral measure $\rho$, $f$ is thus a Gaussian random variable.
Now the vector $f\circ\varphi^{-1}$ belongs to $L^{2}\left(\mu^{*}\right)$,
has spectral measure $\rho$ and $\left\{ f\circ\varphi^{-1}\circ T_{*}^{n}\right\} _{n\in\mathbb{Z}}$
is a Gaussian stationary process under $\mu^{*}$. But since $\rho\ll\sigma$,
we have $\rho\perp{\displaystyle \sum_{k=2}^{+\infty}}\frac{1}{k!}\sigma^{*k}$
which implies that, for all $n\in\mathbb{Z}$, $f\circ\varphi^{-1}\circ T_{*}^{n}$
belongs to the first chaos of $L^{2}\left(\mu^{*}\right)$ which implies
that the graph joining is ID which is impossible by ID-disjointness
(Proposition \ref{pro:IDdisjPoissonGauss}).
\end{proof}
It is well known that Gaussian systems are always isomorphic to an
arbitrary number of direct products of transformations. We will show
in the next proposition, that it is no longer the case for some Poisson
suspensions. This establishes a drastic difference with Gaussian systems.

\begin{prop}
Let $T$ with property (\ref{eq:singularconvolution}). $\left(X,\mathcal{A},\mu,T\right)$
is ergodic if and only if there don't exist two non-trivial $T_{*}$-invariant
and independent sub-$\sigma$-algebra $\mathcal{B}$ and $\mathcal{C}$
such that $\mathcal{A}^{*}=\mathcal{B}\vee\mathcal{C}$, i.e. $\left(X^{*},\mathcal{A}^{*},\mu^{*},T_{*}\right)$
is not isomorphic to a non trivial direct product.
\end{prop}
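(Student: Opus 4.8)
The plan is to prove the two implications separately; only ``$T$ ergodic $\Rightarrow$ no nontrivial product decomposition'' uses property (\ref{eq:singularconvolution}), and the expected difficulty lies there.

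Suppose first that $T$ is \emph{not} ergodic, so there is a $T$-invariant set $A$ with $\mu(A)>0$ and $\mu(X\setminus A)>0$. Viewing $X$ as the disjoint union of the invariant pieces $A$ and $X\setminus A$, the defining independence property of a Poisson measure gives the elementary isomorphism $\nu\mapsto(\nu_{|A},\nu_{|X\setminus A})$ between $(X^{*},\mathcal{A}^{*},\mu^{*},T_{*})$ and the direct product of the two Poisson suspensions built over $(A,\mu_{|A},T_{|A})$ and $(X\setminus A,\mu_{|X\setminus A},T_{|X\setminus A})$. Pulling back the two coordinate $\sigma$-algebras yields $T_{*}$-invariant, mutually independent, non-trivial sub-$\sigma$-algebras $\mathcal{B},\mathcal{C}$ with $\mathcal{B}\vee\mathcal{C}=\mathcal{A}^{*}$; equivalently $T_{*}$ is a nontrivial direct product. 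Property (\ref{eq:singularconvolution}) is not needed here.

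For the converse, assume $T$ is ergodic and, for contradiction, that $\mathcal{A}^{*}=\mathcal{B}\vee\mathcal{C}$ with $\mathcal{B},\mathcal{C}$ non-trivial, $T_{*}$-invariant and independent, so that $L^{2}(\mu^{*})=L^{2}(\mathcal{B})\otimes L^{2}(\mathcal{C})$ with $U_{T_{*}}$ acting as $U_{T_{*}}|_{L^{2}(\mathcal{B})}\otimes U_{T_{*}}|_{L^{2}(\mathcal{C})}$. I would first locate the first chaos $\mathfrak{C}$. Writing $m_{\mathcal{B}},m_{\mathcal{C}}$ for the reduced maximal spectral types on $L_{0}^{2}(\mathcal{B}),L_{0}^{2}(\mathcal{C})$, both are $\ll\sum_{k\geq1}\frac{1}{k!}\sigma^{*k}$ (the reduced maximal type of $T_{*}$, recalled in Section \ref{sub:Poissonian-map-and}), so the cross summand $L_{0}^{2}(\mathcal{B})\otimes L_{0}^{2}(\mathcal{C})$ in the orthogonal $U_{T_{*}}$-invariant decomposition $L_{0}^{2}(\mu^{*})=L_{0}^{2}(\mathcal{B})\oplus L_{0}^{2}(\mathcal{C})\oplus\bigl(L_{0}^{2}(\mathcal{B})\otimes L_{0}^{2}(\mathcal{C})\bigr)$ has maximal spectral type $\ll m_{\mathcal{B}}*m_{\mathcal{C}}\ll\sum_{k\geq2}\frac{1}{k!}\sigma^{*k}$ (a convolution of types carried by exponents $\geq1$ is carried by exponents $\geq2$), which by (\ref{eq:singularconvolution}) is singular to $\sigma$. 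On the other hand the chaos decomposition $L_{0}^{2}(\mu^{*})=\mathfrak{C}\oplus\bigoplus_{n\geq2}\mathcal{H}_{n}$ has $\mathfrak{C}$ of type $\sigma$ and $\bigoplus_{n\geq2}\mathcal{H}_{n}$ of type $\ll\sum_{k\geq2}\frac{1}{k!}\sigma^{*k}$, so the spectral projection $P_{\sigma}$ of $U_{T_{*}}$ onto the part absolutely continuous with respect to $\sigma$ is exactly the orthogonal projection onto $\mathfrak{C}$. Since $P_{\sigma}$ maps any $U_{T_{*}}$-invariant subspace $V$ onto $V\cap\mathfrak{C}$, applying it to the three summands gives $\mathfrak{C}=\mathfrak{C}_{\mathcal{B}}\oplus\mathfrak{C}_{\mathcal{C}}$ with $\mathfrak{C}_{\mathcal{B}}:=\mathfrak{C}\cap L_{0}^{2}(\mathcal{B})$, $\mathfrak{C}_{\mathcal{C}}:=\mathfrak{C}\cap L_{0}^{2}(\mathcal{C})$: an orthogonal, $U_{T}$-invariant splitting of $L^{2}(\mu)$ via the identification $\mathfrak{C}\cong L^{2}(\mu)$ under which $U_{T_{*}}$ restricts to $U_{T}$. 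Both pieces are nonzero, since e.g. $\mathfrak{C}_{\mathcal{B}}=0$ would force $\mathfrak{C}\subseteq L^{2}(\mathcal{C})$, hence (the first chaos generating $\mathcal{A}^{*}$) $\mathcal{A}^{*}\subseteq\mathcal{C}$ and $\mathcal{B}$ trivial.

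The last, and I expect hardest, step is to upgrade this abstract $U_{T}$-invariant splitting of $\mathfrak{C}$ into a genuine partition of the base. The key is the Poissonian description of independence in the first chaos: for $f,g\in L^{2}(\mu)$ the centered stochastic integrals $M(f),M(g)$ are independent iff $f$ and $g$ have $\mu$-essentially disjoint supports, because $(M(f),M(g))$ is a bivariate infinitely divisible vector with no Gaussian component whose L\'evy measure is the image of $\mu$ under $x\mapsto(f(x),g(x))$, and independence of the two coordinates is equivalent to that L\'evy measure being carried by the coordinate axes. As $\mathcal{B}\perp\mathcal{C}$, every $f\in\mathfrak{C}_{\mathcal{B}}\subseteq L^{2}(\mathcal{B})$ and $g\in\mathfrak{C}_{\mathcal{C}}\subseteq L^{2}(\mathcal{C})$ give independent $M(f),M(g)$, so the support $X_{1}$ of the subspace $\mathfrak{C}_{\mathcal{B}}$ and the support $X_{2}$ of $\mathfrak{C}_{\mathcal{C}}$ are disjoint; they cover $X$ because $\mathfrak{C}_{\mathcal{B}}\oplus\mathfrak{C}_{\mathcal{C}}=L^{2}(\mu)$ (an indicator of a positive-measure subset of $X\setminus(X_{1}\cup X_{2})$ would decompose with both components zero), so $\{X_{1},X_{2}\}$ partitions $X$ and the direct-sum equality forces $\mathfrak{C}_{\mathcal{B}}=L^{2}(X_{1})$, $\mathfrak{C}_{\mathcal{C}}=L^{2}(X_{2})$, both of positive measure. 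Finally $U_{T}$-invariance of $L^{2}(X_{1})$ means $TX_{1}=X_{1}$ mod $\mu$, contradicting ergodicity of $T$; the equivalence with the ``non trivial direct product'' formulation is the standard correspondence between product decompositions and pairs of complementary independent invariant factors. The spectral splitting of $\mathfrak{C}$ is routine once (\ref{eq:singularconvolution}) is invoked, whereas the genuine obstacle is the passage from a $U_{T}$-invariant subspace of the first chaos to an invariant set of the base, which is exactly what the ``independence $\Leftrightarrow$ disjoint supports'' property of Poissonian first-chaos vectors supplies.
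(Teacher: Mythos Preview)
Your proof is correct and follows essentially the same strategy as the paper's: use property~(\ref{eq:singularconvolution}) to show that each factor $\mathcal{B},\mathcal{C}$ must contain nonzero first-chaos vectors, then invoke the fact (Maruyama; equivalently your L\'evy-measure argument) that independence of centered stochastic integrals forces disjoint supports, yielding disjoint nontrivial $T$-invariant sets and contradicting ergodicity. Your version is slightly more systematic---you obtain the full orthogonal splitting $\mathfrak{C}=\mathfrak{C}_{\mathcal{B}}\oplus\mathfrak{C}_{\mathcal{C}}$ via the spectral projection $P_{\sigma}$ and then a genuine partition $X=X_{1}\sqcup X_{2}$---whereas the paper argues by contradiction that $\sigma\not\perp\nu_{a}$ and $\sigma\not\perp\nu_{b}$ to locate just one vector $M(f_{a})$, $M(f_{b})$ in each factor and then uses the $T$-orbits of their supports; but the content is the same.
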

\begin{proof}
Assume $\left(X,\mathcal{A},\mu,T\right)$ ergodic. Let $\mathcal{B}$
be a non trivial $T_{*}$-invariant sub-$\sigma$-algebra of $\mathcal{A}^{*}$
and suppose there exists another $T_{*}$-invariant sub-$\sigma$-algebra
$\mathcal{C}$ independent of $\mathcal{B}$ and such that $\mathcal{A}^{*}=\mathcal{B}\vee\mathcal{C}$.
Denote by $\nu_{a}$ (resp. $\nu_{b}$) the (reduced) maximal spectral
type of the quotient action $T_{*\diagup\mathcal{B}}$ (resp. $T_{*\diagup\mathcal{C}}$).
We must have, up to equivalence of measures, ${\displaystyle \sum_{k=1}^{+\infty}}\frac{1}{k!}\sigma^{*k}=\nu_{a}+\nu_{b}+\nu_{a}*\nu_{b}$.
Assume $\sigma\perp\nu_{b}$, we have thus $\nu_{b}\ll{\displaystyle \sum_{k=2}^{+\infty}}\frac{1}{k!}\sigma^{*k}$
which implies that $\nu_{a}*\nu_{b}\ll{\displaystyle \sum_{k=2}^{+\infty}}\frac{1}{k!}\sigma^{*k}$
and then $\sigma\ll\nu_{a}$ since $\sigma\perp{\displaystyle \sum_{k=2}^{+\infty}}\frac{1}{k!}\sigma^{*k}$.
This means that all vectors with spectral measure $\sigma$ are measurable
with respect to $\mathcal{B}$. But these vectors are exactly those
of the first chaos and this vectors generate the whole $\sigma$-algebra
$\mathcal{A}^{*}$, that is $\mathcal{A}^{*}=\mathcal{B}$ which is
a contradiction with the initial assumptions. By symmetry, we deduce
that $\sigma\not\perp\nu_{a}$ and $\sigma\not\perp\nu_{b}$, and
this implies the existence of two centered stochastic integrals in
the first chaos $M\left(f_{a}\right)$ and $M\left(f_{b}\right)$
measurable with respect to $\mathcal{B}$ and $\mathcal{C}$ respectively.
Then  $\left\{ M\left(f_{a}\right)\circ T_{*}^{n}\right\} _{n\in\mathbb{Z}}$
and $\left\{ M\left(f_{b}\right)\circ T_{*}^{n}\right\} _{n\in\mathbb{Z}}$
are independent which is in fact impossible because, by using a result
of Maruyama (see \cite{Maruyama70IDproc}), this would imply the disjointness
of the support of the functions $\left\{ f_{a}\circ T^{n}\right\} _{n\in\mathbb{Z}}$
with the support of the functions $\left\{ f_{b}\circ T^{n}\right\} _{n\in\mathbb{Z}}$
and these two non trivial $T$-invariant sets cannot be disjoint since
$\left(X,\mathcal{A},\mu,T\right)$ is ergodic.

The converse is obvious.
\end{proof}

\subsection{Poissonian factors and IDp processes}

We will study factors of a Poisson suspension generated by processes
which are stochastic integrals. Our aim is to show that they have
the expected form, that is, they are Poissonian factors. We will need
to combine Proposition \ref{pro:Markov} with the next lemma.

\begin{lem}
\label{lem:Poisfactor}Consider a factor $\mathcal{B}$ of $\left(X^{*},\mathcal{A}^{*},\mu^{*},T_{*}\right)$
and the corresponding conditional expectation $E_{\mathcal{B}}$.
Assume $E_{\mathcal{B}}$ preserves the first chaos $\mathfrak{C}$
and consider $\mathcal{C}$, the factor of $\left(X,\mathcal{A},\mu,T\right)$
generated by those $g\in L^{2}\left(\mu\right)$ such that $I\left(g\right)\in E_{\mathcal{B}}\left(\mathfrak{C}\right)$.
Then $\mathcal{C}^{*}\subset\mathcal{B}$.
\end{lem}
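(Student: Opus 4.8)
The plan is to show $\mathcal{C}^{*}\subset\mathcal{B}$ by first understanding the operator $E_{\mathcal{B}}$ restricted to the first chaos, and then bootstrapping from the first chaos to the full Poissonian $\sigma$-algebra $\mathcal{C}^{*}$. First I would apply Proposition \ref{pro:Markov}: since $E_{\mathcal{B}}$ is a Markov operator on $L^{2}\left(\mu^{*}\right)$ which by hypothesis preserves the first chaos $\mathfrak{C}$, its restriction to $\mathfrak{C}$ induces, under the canonical identification $\mathfrak{C}\simeq L^{2}\left(\mu\right)$, a sub-Markov operator $\Theta$ on $L^{2}\left(\mu\right)$. Because $E_{\mathcal{B}}$ is an orthogonal projection, $\Theta$ is idempotent and self-adjoint, hence an orthogonal projection on $L^{2}\left(\mu\right)$; being also a sub-Markov operator, it must be the conditional expectation $E_{\mathcal{C}_{0}}$ for some $\sigma$-finite factor $\mathcal{C}_{0}\subset\mathcal{A}$ (modulo finite positive measure sets). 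The point is that $\mathcal{C}_{0}$ is precisely the factor $\mathcal{C}$ of the statement: $I\left(g\right)\in E_{\mathcal{B}}\left(\mathfrak{C}\right)$ iff $\Theta g=g$ iff $g$ is $\mathcal{C}_{0}$-measurable, so $\mathcal{C}=\mathcal{C}_{0}$ and $\Theta=E_{\mathcal{C}}$.

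Next I would transfer this back upstairs. For every $g\in L^{2}\left(\mu\right)$ that is $\mathcal{C}$-measurable we have $E_{\mathcal{B}}I\left(g\right)=I\left(g\right)$, so $I\left(g\right)$ is $\mathcal{B}$-measurable. In particular, taking $g=1_{A}$ for $A\in\mathcal{C}$ with $0<\mu\left(A\right)<\infty$, the centered integral $M\left(1_{A}\right)=N\left(A\right)-\mu\left(A\right)$ is $\mathcal{B}$-measurable, hence $N\left(A\right)$ is $\mathcal{B}$-measurable. Since $\mathcal{C}^{*}$ is by definition generated by the random variables $N\left(A\right)$, $A\in\mathcal{C}$ (and sets of finite $\mu$-measure generate, the others being handled by monotone limits of such as in Lemma \ref{lem:continuity}), it follows that $\mathcal{C}^{*}\subset\mathcal{B}$.

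The main obstacle I anticipate is the step identifying the induced orthogonal sub-Markov projection $\Theta$ with an honest conditional expectation $E_{\mathcal{C}}$ on $L^{2}\left(\mu\right)$ in the $\sigma$-finite (infinite measure) setting, and checking that the factor it determines coincides with the $\mathcal{C}$ defined in the statement. One must verify that a self-adjoint idempotent sub-Markov operator on $L^{2}$ of a $\sigma$-finite space is the conditional expectation onto the $\sigma$-finite sub-$\sigma$-algebra of its fixed points — this is standard but requires a little care with $\sigma$-finiteness of the range (here guaranteed because $T_{*}$-invariance and the structure of $\mathfrak{C}$ force the range to be a reasonable factor), and with the normalization conventions for $M\left(\cdot\right)$ versus $N\left(\cdot\right)$. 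A secondary, minor point is confirming that $\mathcal{C}$ is genuinely $T$-invariant: this follows because $E_{\mathcal{B}}$ commutes with $U_{T_{*}}$, hence $\Theta$ commutes with $U_{T}$, so its fixed-point algebra $\mathcal{C}$ is $T$-invariant. Once these identifications are in hand, the passage from the first chaos to $\mathcal{C}^{*}$ is immediate.
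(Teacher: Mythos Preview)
Your approach is the same as the paper's: apply Proposition~\ref{pro:Markov} to get a sub-Markov operator $\Theta$ on $L^{2}(\mu)$, recognize that it is a self-adjoint idempotent, identify its range as $L^{2}(\mathcal{C})$ for a factor $\mathcal{C}$ of the base, and then pass from first-chaos generators to $\mathcal{C}^{*}\subset\mathcal{B}$. The final bootstrapping step via $N(A)=I(1_{A})+\mu(A)$ is fine and is exactly how the paper's last line ``$\mathrm{Im}\,V=L^{2}(X,\mathcal{C},\mu,T)$ implies $\mathcal{C}^{*}\subset\mathcal{B}$'' should be read.

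The gap is precisely at the point you flagged, and your dismissal of it is not justified. Your assertion that a self-adjoint idempotent sub-Markov operator on $L^{2}(\mu)$ is $E_{\mathcal{C}_{0}}$ for a \emph{$\sigma$-finite} factor $\mathcal{C}_{0}$ is false in general, and nothing about $T_{*}$-invariance or ``the structure of $\mathfrak{C}$'' repairs it. A simple instance already inside the paper's setup: if $X=X_{1}\sqcup X_{2}$ with $\mu(X_{2})=\infty$ and $\mathcal{B}$ is the Poissonian factor coming from $X_{1}$, then $\Theta$ is multiplication by $1_{X_{1}}$, whose fixed-point $\sigma$-algebra on $X$ is $\mathcal{A}_{\mid X_{1}}$ together with the single atom $X_{2}$ of infinite measure, hence not $\sigma$-finite. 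The paper confronts this head on: it introduces the density $p\le 1$ of the measure $A\mapsto\int V1_{A}\,d\mu$, exhibits a $T$-invariant set $S$ (the support of $Vf$ for a strictly positive $f$), shows $p\equiv 1$ on $S$ and $V=0$ on functions supported off $S$, and only then concludes that $V$ restricted to $L^{2}(\mu_{\mid S})$ is a genuine \emph{Markov} projection, hence $E_{\widetilde{\mathcal{C}}}$ for a $\sigma$-finite factor $\widetilde{\mathcal{C}}$ of $(S,\mu_{\mid S})$. The factor $\mathcal{C}$ on $X$ is then $\sigma(\widetilde{\mathcal{C}})$, explicitly noted to be non-$\sigma$-finite when $S\neq X$; this does no harm because $\mathcal{C}^{*}$ depends only on the finite-measure sets of $\mathcal{C}$. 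So the missing idea in your argument is exactly this support/restriction manoeuvre that upgrades sub-Markov to Markov on the relevant piece.
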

\begin{proof}
We denote by $V$ the sub-Markov operator (thanks to Proposition \ref{pro:Markov})
induced by $E_{\mathcal{B}}$ on $L^{2}\left(\mu\right)$, $V$ is
an orthogonal projector. From $\int_{X}V1_{A}d\mu\leq\mu\left(A\right)$,
the $T$-invariant measure $A\mapsto\int_{X}V1_{A}d\mu$ is absolutely
continuous with respect to $\mu$. Thus we have $\int_{X}V1_{A}d\mu=\int_{X}1_{A}pd\mu$
for a function $p\leq1$. Take $\left\{ f_{n}\right\} _{n\in\mathbb{N}}$
a linearly dense family of positive functions, and form $f=\sum_{n\in\mathbb{N}}a_{n}f_{n}$
such that, $a_{n}>0$ and $f\in L^{2}\left(\mu\right)$. Let $S$
be the support of $Vf$ (it is a $T$-invariant set), since $V$ is
a projector:

\[
\int_{X}Vfd\mu=\int_{X}V\left(Vf\right)d\mu=\int_{X}Vfpd\mu\]

Thus $p\equiv1$ on $S$. It is immediate to see that $Vg=0$ if $g$
is zero on $S$. Take $g>0$, as $gVf=0$, we have $\left\langle Vg,f\right\rangle =\left\langle g,Vf\right\rangle =\int_{X}gVfd\mu=0$
and, by positivity, we deduce that $\left\langle Vg,f_{n}\right\rangle =0$
for any $n\in\mathbb{N}$, thus $Vg=0$.

$V$ restricted to $L^{2}\left(\mu_{|S}\right)$ is now a Markov operator
and since it is also an orthogonal projection, it is a conditional
expectation on a $\sigma$-finite factor $\widetilde{\mathcal{C}}$
of $\left(S,\mathcal{A}_{|S},\mu_{|S},T_{|S}\right)$ and then induces
the desired (non $\sigma$-finite if $S\neq X$) factor $\mathcal{C}$
(i.e. $\mathcal{C}=\sigma\left(\widetilde{\mathcal{C}}\right)$) of
$\left(X,\mathcal{A},\mu,T\right)$. The only non zero square integrable
function measurable with respect to $\mathcal{C}$ are supported on
$S$ and their restriction to $S$ are exactly the functions measurable
with respect to $\widetilde{\mathcal{C}}$. Thus $\textrm{Im}V=L^{2}\left(X,\mathcal{C},\mu,T\right)$
and it implies that $\mathcal{C}^{*}\subset\mathcal{B}$.
\end{proof}
The assumption that $\Phi\left(\mathfrak{C}\right)\subset\mathfrak{C}$
is automatically satisfied if $T$ satisfies property (\ref{eq:singularconvolution})
since it has already been noticed in \cite{LemParThou00Gausselfjoin}
that, for spectral measures, we have $\sigma_{\Phi f}\ll\sigma_{f}$.

Our main application of the preceding lemma concerns \emph{infinitely
divisible stationary processes without Gaussian part} (abr. IDp) (this
includes, in particular all $\alpha$-stable stationary processes).
We recall that these processes are exactly those that can be obtained
as stochastic integrals with respect to a Poisson measure (see \cite{Maruyama70IDproc}
and \cite{Roy06IDstat}).

\begin{thm}
Assume that $\left(X,\mathcal{A},\mu,T\right)$ has simple spectrum,
satisfying property (\ref{eq:singularconvolution}). Let $I\left(f\right)$
be a stochastic integral of $\left(X^{*},\mathcal{A}^{*},\mu^{*},T_{*}\right)$.
Then, the factor $\mathcal{B}$ generated by the process $\left\{ I\left(f\right)\circ T_{*}^{n}\right\} _{n\in\mathbb{Z}}$
is indeed the Poissonian factor $\mathcal{C}^{*}$, where $\mathcal{C}$
is the factor of the base generated by $\left\{ f\circ T^{n}\right\} _{n\in\mathbb{Z}}$.
If $f\in L^{2}\left(\mu\right)$, then we can remove the hypothesis
on the simplicity of the spectrum.
\end{thm}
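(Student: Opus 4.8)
The plan is to deduce the theorem from Lemma~\ref{lem:Poisfactor} together with Proposition~\ref{pro:Markov}, once we have verified the hypothesis that the conditional expectation $E_{\mathcal{B}}$ associated to the factor $\mathcal{B}$ generated by the process $\left\{I\left(f\right)\circ T_{*}^{n}\right\}_{n\in\mathbb{Z}}$ preserves the first chaos $\mathfrak{C}$. First I would treat the case $f\in L^{2}\left(\mu\right)$, where no simplicity assumption is needed: here $I\left(f\right)=M\left(f\right)$ is (up to a constant) a genuine element of $\mathfrak{C}$, and the closed invariant subspace generated by $\left\{M\left(f\right)\circ T_{*}^{n}\right\}_{n\in\mathbb{Z}}$ inside $L^{2}\left(\mu^{*}\right)$ is, since $U_{T_{*}}$ is the exponential of $U_{T}$ and $\mathfrak{C}$ is $U_{T_{*}}$-invariant, contained in $\mathfrak{C}$; thus $E_{\mathcal{B}}$ restricted to $\mathfrak{C}$ is just the orthogonal projection onto the cyclic subspace of $M\left(f\right)$ in $\mathfrak{C}$, and in particular $E_{\mathcal{B}}\left(\mathfrak{C}\right)\subset\mathfrak{C}$. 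Actually I must be slightly more careful: a priori $E_{\mathcal{B}}$ could move vectors of higher chaos into $\mathfrak{C}$; but $\mathcal{B}$ is generated by first-chaos vectors, so $E_{\mathcal{B}}$ commutes with the exponential structure in the sense needed — more precisely, the factor $\mathcal{B}$ is of the form $\mathcal{D}^{*}$ only once we know it, so instead I argue directly: $E_{\mathcal{B}}$ is a Markov operator commuting with $U_{T_{*}}$, and by spectral considerations $\sigma_{E_{\mathcal{B}}g}\ll\sigma_{g}$, so $E_{\mathcal{B}}$ maps $\mathfrak{C}$ (spectral type $\ll\sigma$) into the part of $L^{2}\left(\mu^{*}\right)$ with spectral type $\ll\sigma$, which under property (\ref{eq:singularconvolution}) is exactly $\mathbb{C}\oplus\mathfrak{C}$. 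Hence $E_{\mathcal{B}}$ preserves $\mathfrak{C}$.

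Next, for general $f$ (an IDp stochastic integral, not necessarily square-integrable), $I\left(f\right)$ need not lie in $L^{2}\left(\mu^{*}\right)$, so I would work with bounded measurable functions of the increments: the factor $\mathcal{B}$ is generated by the random variables $N\left(A\right)\mapsto$ functions of $I\left(f\circ T^{n}\right)$, equivalently by $\sigma\left(I\left(g\right):g\in\overline{\mathrm{span}}\left\{f\circ T^{n}\right\}=:\mathcal{H}_{f}\right)$ where $\mathcal{H}_{f}$ is the closed $U_{T}$-invariant subspace of (a suitable space of admissible integrands) generated by $f$. The key point is that, because $\left(X,\mathcal{A},\mu,T\right)$ has simple spectrum, $\mathcal{H}_{f}$ is determined by its spectral type $\sigma_{f}\ll\sigma$, and the $L^{2}$-integrands lying in $\mathcal{H}_{f}$ — call this subspace $H_{f}\subset L^{2}\left(\mu\right)$ — are precisely those with spectral measure $\ll\sigma_{f}$. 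I would then show that the bounded functions measurable with respect to $\mathcal{B}$ have, in their chaos decomposition, all components built from $H_{f}$; in particular $E_{\mathcal{B}}$ acts on $\mathfrak{C}\simeq L^{2}\left(\mu\right)$ as the orthogonal projection onto $H_{f}$, so $E_{\mathcal{B}}\left(\mathfrak{C}\right)\subset\mathfrak{C}$.

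With the chaos-preservation hypothesis verified, Proposition~\ref{pro:Markov} applies and $E_{\mathcal{B}}$ induces a sub-Markov operator $V$ on $L^{2}\left(\mu\right)$ which, being the restriction of an orthogonal projection, is itself an orthogonal projection; Lemma~\ref{lem:Poisfactor} then produces a factor $\mathcal{C}$ of the base with $\mathcal{C}^{*}\subset\mathcal{B}$. It remains to identify $\mathcal{C}$ and prove the reverse inclusion $\mathcal{B}\subset\mathcal{C}^{*}$. For the identification: by the construction in Lemma~\ref{lem:Poisfactor}, $\mathcal{C}$ is the factor of the base generated by those $g\in L^{2}\left(\mu\right)$ with $I\left(g\right)\in E_{\mathcal{B}}\left(\mathfrak{C}\right)=H_{f}$, i.e. $\sigma_{g}\ll\sigma_{f}$; by simplicity of spectrum this is exactly the spectral subspace attached to $f$, which is the $L^{2}$-part of the $U_{T}$-invariant subspace generated by $\left\{f\circ T^{n}\right\}$, so $\mathcal{C}$ is the factor generated by $\left\{f\circ T^{n}\right\}_{n\in\mathbb{Z}}$, as claimed. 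For the reverse inclusion: $\mathcal{C}^{*}$ contains every $I\left(g\right)$ with $g$ measurable w.r.t.\ $\mathcal{C}$, in particular all $I\left(f\circ T^{n}\right)$, hence $\mathcal{C}^{*}\supset\mathcal{B}$; combined with $\mathcal{C}^{*}\subset\mathcal{B}$ this gives $\mathcal{B}=\mathcal{C}^{*}$. The main obstacle I anticipate is the very first step for non-$L^{2}$ integrands $f$: making rigorous the claim that $E_{\mathcal{B}}$ preserves the first chaos when $I\left(f\right)\notin L^{2}\left(\mu^{*}\right)$, since then $\mathcal{B}$ is not visibly generated by $L^{2}$-vectors and one must pass through bounded functionals of the process and control their chaos expansions using simplicity of the spectrum of $T$ — this is exactly where the simple-spectrum hypothesis is essential and cannot be dropped, in contrast to the $f\in L^{2}\left(\mu\right)$ case.
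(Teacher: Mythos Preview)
Your treatment of the $L^{2}$ case is correct and is exactly the paper's: once property~(\ref{eq:singularconvolution}) gives $E_{\mathcal{B}}(\mathfrak{C})\subset\mathfrak{C}$ via $\sigma_{E_{\mathcal{B}}g}\ll\sigma_{g}$, Lemma~\ref{lem:Poisfactor} applies directly; since $f\circ T^{n}\in L^{2}(\mu)$ with $I(f\circ T^{n})\in\mathcal{B}\cap\mathfrak{C}$, the factor produced by the lemma automatically contains the one generated by $\{f\circ T^{n}\}$, and the reverse inclusion $\mathcal{B}\subset\mathcal{C}^{*}$ is immediate.

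For general $f$, however, there is a genuine gap, and you correctly flag it yourself at the end. The difficulty is not with $E_{\mathcal{B}}(\mathfrak{C})\subset\mathfrak{C}$ (property~(\ref{eq:singularconvolution}) still gives this), but with showing that the factor $\mathcal{C}'$ produced by Lemma~\ref{lem:Poisfactor} contains the factor $\mathcal{C}$ generated by $\{f\circ T^{n}\}$. For that you must exhibit concrete $L^{2}(\mu)$-functions $g$, generating $\mathcal{C}$, with $I(g)$ lying in $\mathcal{B}\cap\mathfrak{C}$. When $f\notin L^{2}(\mu)$ you cannot take $g=f\circ T^{n}$, and your references to ``$\sigma_{f}$'', ``$\mathcal{H}_{f}$'' and ``the $L^{2}$-part of the $U_{T}$-invariant subspace generated by $f$'' are not well-defined objects in any space where $U_{T}$ acts unitarily. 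The assertion that $E_{\mathcal{B}}$ restricted to $\mathfrak{C}$ equals the projection onto some $H_{f}$ is the conclusion one wants, not an independently verifiable step.

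The paper's device is concrete: work with the bounded (hence $L^{2}(\mu^{*})$) vectors
\[
\exp i\langle a,I(f)\rangle-\mathbb{E}\bigl[\exp i\langle a,I(f)\rangle\bigr],
\]
which are manifestly $\mathcal{B}$-measurable. A computation from \cite{Roy06IDstat} shows their spectral measure is $\left|\mathbb{E}[\exp i\langle a,I(f)\rangle]\right|^{2}\sum_{k\geq1}\frac{1}{k!}\sigma_{a}^{*k}$, where $\sigma_{a}$ is the spectral measure of the \emph{genuine} $L^{2}(\mu)$-function $\exp i\langle a,f\rangle-1$. Since $\sigma_{a}\ll\sigma$, property~(\ref{eq:singularconvolution}) forces the vectors of spectral measure $\sigma_{a}$ inside this cyclic space into $\mathfrak{C}$; simple spectrum of $T$ then identifies them as the iterates of $I(\exp i\langle a,f\rangle-1)$. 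Thus $I(\exp i\langle a,f\rangle-1)$ is $\mathcal{B}$-measurable, and as the functions $\exp i\langle a,f\rangle-1$ generate exactly $\mathcal{C}$, Lemma~\ref{lem:Poisfactor} yields $\mathcal{C}^{*}\subset\mathcal{B}$. This characteristic-function trick is the missing ingredient in your argument, and it is precisely where simple spectrum enters, confirming your intuition about why that hypothesis cannot be dropped.
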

\begin{proof}
We consider vectors of the kind $\exp i\left\langle a,I\left(f\right)\right\rangle -\mathbb{E}\left[\exp i\left\langle a,I\left(f\right)\right\rangle \right]$
where $\left\langle a,I\left(f\right)\right\rangle $ is some finite
linear combination of the $I\left(f\right)\circ T_{*}^{n}$. In \cite{Roy06IDstat},
we have evaluated the spectral measure of such vectors, it is the
measure $\left|\mathbb{E}\left[\exp i\left\langle a,I\left(f\right)\right\rangle \right]\right|^{2}{\displaystyle \sum_{k=1}^{+\infty}}\frac{1}{k!}\sigma_{a}^{*k}$,
where $\sigma_{a}$ is the spectral measure of $\exp i\left\langle a,f\right\rangle -1$
in $\left(X,\mathcal{A},\mu,T\right)$.

Call $C_{a}$ the cyclic space associated to $\exp i\left\langle a,I\left(f\right)\right\rangle -\mathbb{E}\left[\exp i\left\langle a,I\left(f\right)\right\rangle \right]$,
since $\sigma_{a}\ll{\displaystyle \sum_{k=1}^{+\infty}}\frac{1}{k!}\sigma_{a}^{*k}$,
this cyclic space contains vectors with $\sigma_{a}$ as spectral
measure. Thanks to the mutual singularity of $\sigma^{*n}$ with $\sigma$,
these vectors necessarily belong to the first chaos, moreover, since
$\left(X,\mathcal{A},\mu,T\right)$ has simple spectrum, $I\left(\exp i\left\langle a,f\right\rangle -1\right)$
and its iterates stand among them. We have shown that $I\left(\exp i\left\langle a,f\right\rangle -1\right)$
are measurable with respect to $\mathcal{B}$. The factor generated
by $\exp i\left\langle a,f\right\rangle -1$, for any $a$ is exactly
$\mathcal{C}$. Thanks to Lemma \ref{lem:Poisfactor}, we have $\mathcal{C}^{*}\subset\mathcal{B}$
and the result follows as we also have the other inclusion.

The last statement is a direct application of Lemma \ref{lem:Poisfactor}.
\end{proof}
This theorem proves that many stationary IDp processes are isomorphic
to the Poisson suspension associated to their Lévy measure and not
just a factor. In particular, it reduces the study of a large class
of stationary IDp process, which may have very bad integrability properties,
to that, much more tractable, of a Poisson suspension. For example,
it solves, for this class, the following open question:

Is the maximal spectral type of any stationary IDp process of the
form ${\displaystyle \sum_{k=1}^{+\infty}}\frac{1}{k!}\sigma^{*k}$
?

It is thus maybe worthwhile to give an autonomous version of the last
theorem for IDp processes.

\begin{thm}
Let $\left\{ X_{n}\right\} _{n\in\mathbb{Z}}$ be a stationary IDp
process with Lévy measure $Q$. Assume that $\left(\mathbb{R}^{\mathbb{Z}},\mathcal{B}^{\otimes\mathbb{Z}},Q,S\right)$
has simple spectrum and $S$ has property (\ref{eq:singularconvolution}).
Then $\left\{ X_{n}\right\} _{n\in\mathbb{Z}}$ is isomorphic to $\left(\left(\mathbb{R}^{\mathbb{Z}}\right)^{*},\left(\mathcal{B}^{\otimes\mathbb{Z}}\right)^{*},Q^{*},S_{*}\right)$.
If $X_{0}$ is square integrable, then we can remove the hypothesis
on the simplicity of the spectrum.
\end{thm}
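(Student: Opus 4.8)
The plan is to deduce this ``autonomous'' statement from the previous theorem by reinterpreting a stationary IDp process as a Poisson suspension together with a distinguished stochastic integral. First I would recall (from \cite{Maruyama70IDproc} and \cite{Roy06IDstat}) that a stationary IDp process $\left\{X_{n}\right\}_{n\in\mathbb{Z}}$ with L\'evy measure $Q$ admits a representation $X_{n}=I\left(f\circ S^{n}\right)$, where $Q$ lives on $\mathbb{R}^{\mathbb{Z}}$, the shift $S$ preserves $Q$, $f:\mathbb{R}^{\mathbb{Z}}\to\mathbb{R}$ is the coordinate-zero evaluation (or more precisely a fixed measurable function built from it), and $I\left(\cdot\right)$ denotes the stochastic integral with respect to the Poisson measure $Q^{*}$ over the base $\left(\mathbb{R}^{\mathbb{Z}},\mathcal{B}^{\otimes\mathbb{Z}},Q,S\right)$. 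In particular $\left\{X_{n}\right\}_{n\in\mathbb{Z}}$ is, as a stationary process, the process $\left\{I\left(f\right)\circ S_{*}^{n}\right\}_{n\in\mathbb{Z}}$ realised inside $\left(\left(\mathbb{R}^{\mathbb{Z}}\right)^{*},\left(\mathcal{B}^{\otimes\mathbb{Z}}\right)^{*},Q^{*},S_{*}\right)$.

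Next I would apply the preceding theorem to the base $\left(\mathbb{R}^{\mathbb{Z}},\mathcal{B}^{\otimes\mathbb{Z}},Q,S\right)$, which by hypothesis has simple spectrum and satisfies property (\ref{eq:singularconvolution}), and to the stochastic integral $I\left(f\right)$. That theorem tells us that the factor $\mathcal{B}$ of $\left(\left(\mathbb{R}^{\mathbb{Z}}\right)^{*},\left(\mathcal{B}^{\otimes\mathbb{Z}}\right)^{*},Q^{*},S_{*}\right)$ generated by the process $\left\{I\left(f\right)\circ S_{*}^{n}\right\}_{n\in\mathbb{Z}}$ is exactly the Poissonian factor $\mathcal{C}^{*}$, where $\mathcal{C}$ is the factor of the base generated by $\left\{f\circ S^{n}\right\}_{n\in\mathbb{Z}}$. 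But $f$ is the coordinate-zero function, so $\left\{f\circ S^{n}\right\}_{n\in\mathbb{Z}}$ are precisely the coordinate projections of $\mathbb{R}^{\mathbb{Z}}$, which generate the full $\sigma$-algebra $\mathcal{B}^{\otimes\mathbb{Z}}$; hence $\mathcal{C}=\mathcal{B}^{\otimes\mathbb{Z}}$ and therefore $\mathcal{C}^{*}=\left(\mathcal{B}^{\otimes\mathbb{Z}}\right)^{*}$ is the whole $\sigma$-algebra of the suspension. Consequently the process $\left\{X_{n}\right\}_{n\in\mathbb{Z}}=\left\{I\left(f\right)\circ S_{*}^{n}\right\}_{n\in\mathbb{Z}}$ generates all of $\left(\mathcal{B}^{\otimes\mathbb{Z}}\right)^{*}$, which is exactly the statement that $\left\{X_{n}\right\}_{n\in\mathbb{Z}}$ is isomorphic (as a dynamical system) to $\left(\left(\mathbb{R}^{\mathbb{Z}}\right)^{*},\left(\mathcal{B}^{\otimes\mathbb{Z}}\right)^{*},Q^{*},S_{*}\right)$, the isomorphism intertwining the shift on the process side with $S_{*}$. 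For the final sentence, when $X_{0}\in L^{2}$ one has $f\in L^{2}\left(Q\right)$, so the strengthened version of the preceding theorem (which removes the simple-spectrum hypothesis when $f\in L^{2}$) applies verbatim and the same argument goes through.

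The main obstacle I anticipate is not the logical chain above but making the representation step rigorous: one must be careful that the ``canonical'' L\'evy-measure representation $X_{n}=I\left(f\circ S^{n}\right)$ identifies the process algebra with the full suspension $\sigma$-algebra rather than with a proper factor, i.e. that no information about the Poisson configuration is lost by only observing the linear functionals $I\left(f\circ S^{n}\right)$. This is exactly what invoking the previous theorem buys us: it is the content of the inclusion $\mathcal{C}^{*}\subset\mathcal{B}$ from Lemma \ref{lem:Poisfactor}, together with the trivial reverse inclusion, applied in the case $\mathcal{C}=\mathcal{B}^{\otimes\mathbb{Z}}$. So the proof is essentially a dictionary translation, and I would keep it short, spelling out only the identification $\mathcal{C}=\mathcal{B}^{\otimes\mathbb{Z}}$ and the reduction $X_{0}\in L^{2}\Rightarrow f\in L^{2}\left(Q\right)$.
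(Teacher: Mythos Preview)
Your proposal is correct and is exactly the intended deduction: the paper presents this theorem explicitly as an ``autonomous version'' of the preceding theorem and gives no separate proof, so the argument is precisely the dictionary translation you describe --- represent $\{X_n\}$ as $\{I(f)\circ S_*^n\}$ with $f=\pi_0$ inside the suspension over $(\mathbb{R}^{\mathbb{Z}},\mathcal{B}^{\otimes\mathbb{Z}},Q,S)$, apply the previous theorem, and observe that $\mathcal{C}=\sigma\{\pi_0\circ S^n\}=\mathcal{B}^{\otimes\mathbb{Z}}$. Your handling of the $L^2$ clause is also the right one.
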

Let us finish by giving one last application of Lemma \ref{lem:Poisfactor}
that shows that Poissonian factors behave well under classical operations.
We already have pointed out (see the discussion preceding Proposition
\ref{pro:Poissonjoinings=00003DPoisson}) that, in general, we don't
have $\mathcal{C}_{1}^{*}\vee\mathcal{C}_{2}^{*}=\left(\mathcal{C}_{1}\vee\mathcal{C}_{2}\right)^{*}$,
however, by Lemma \ref{lem:Poisfactor}, this is easy to see that
the equality holds if $T$ has property (\ref{eq:singularconvolution}).

\begin{acknowledgement*}
The author would like to thank the anonymous referee for useful comments
and Mariusz Lema$\acute{\mathrm{n}}$czyk, François Parreau and Jean-Paul
Thouvenot for fruitful discussions.
\end{acknowledgement*}
\bibliographystyle{plain}
\bibliography{biblio.bib}

\end{document}